\newtheorem{thm}{Theorem}[section]
\newtheorem{lem}[thm]{Lemma}
\newtheorem{cor}[thm]{Corollary}
\theoremstyle{definition}
\theoremstyle{remark}
\newtheorem{rem}[thm]{Remark}
\newcommand{\floor}[1]{\left\lfloor#1\right\rfloor}
\newcommand{\trop}{\mathrm{trop}}
\newcommand{\Z}{\mathbb{Z}}
\newcommand{\R}{\mathbb{R}}
\newcommand{\U}{\mathcal{U}}
\renewcommand{\P}{\mathbb{P}}
\newcommand{\M}{M}
\newcommand{\abs}[1]{\left\lvert#1\right\rvert}
\newcommand{\Mbar}{\overline{M}}
\newcommand{\into}{\hookrightarrow}
\title{Cross-ratio degrees and triangulations}
\author[Silversmith]{Rob Silversmith}
\address{
  \begin{tabular}{l}
     Rob Silversmith\\
     Mathematics Institute, University of Warwick, Coventry CV4 7AL, UK
\end{tabular}}
\email{Rob.Silversmith@warwick.ac.uk}
\begin{document}

\begin{abstract}
    The cross-ratio degree problem counts configurations of $n$ points on $\P^1$ with $n-3$ prescribed cross-ratios. Cross-ratio degrees arise in many corners of combinatorics and geometry, but their structure is not well-understood in general. Interestingly, examining various special cases of the problem can yield combinatorial structures that are both diverse and rich. In this paper we prove a simple closed formula for a class of cross-ratio degrees indexed by triangulations of an $n$-gon; these degrees are connected to the geometry of the real locus of $\M_{0,n}$, and to positive geometry.
\end{abstract}

\subjclass{14N10,14H10,14H81}

\maketitle

\section{Introduction}
Consider a regular $n$-gon $X$ with edges labeled by $[n]=\{1,\ldots,n\}$, in order. To each diagonal $D$ of $X$ is naturally assigned a 4-element subset of $[n]$, consisting of the four edges that $D$ touches. A triangulation $T$ of $X$ is a choice of $n-3$ diagonals in $X$ that do not cross. Thus we may associate to $T$ a collection $\U=\{S_1,\ldots,S_{n-3}\}$ of 4-elements subsets of $[n]$, see Figure \ref{fig:ExampleTriangulation}.

Let $\M_{0,n}=\M_{0,[n]}$ denote the moduli space of configurations of $n$ distinct points on the Riemann sphere, labeled by $[n]$, up to M\"obius transformation. Recall that $\M_{0,[n]}$ is a smooth $(n-3)$-dimensional affine variety, and that for $S\subseteq[n]$ with $\abs{S}\ge3$, there is a forgetful map $\M_{0,[n]}\to\M_{0,S}.$ By the previous paragraph, a triangulation $T$ of a regular $n$-gon defines a product of forgetful maps 
\begin{align}\label{eq:PiT}
    \pi_T:\M_{0,[n]}\to\prod_{S\in\U}\M_{0,S}.
\end{align} Here $\M_{0,S}\cong\P^1\setminus\{\infty,0,1\}$ by taking the cross-ratio. Note that $\pi_T$ is a map of $(n-3)$-dimensional varieties, hence a general fiber of $\pi_T$ is zero-dimensional with some constant cardinality $d_T$ --- this cardinality is an example of a \emph{cross-ratio degree} in the sense of \cite{Silversmith2021}. The purpose of this brief article is to prove:
\begin{thm}\label{thm:main}
    $d_T=2^{I(T)},$ where $I(T)$ is the number of triangles of $T$ with no exterior edges.
\end{thm}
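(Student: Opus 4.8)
The plan is to induct on $n$, using the fact that every triangulation of a polygon has an \emph{ear}, i.e.\ a triangle two of whose sides are edges of $X$. For $n=3$ the statement is trivial: $\M_{0,3}$ is a point, $I(T)=0$, and $d_T=1$. So assume $n\ge4$, fix an ear $\Delta$, and let $D$ be its non-exterior side; rotating the labels, we may take the two exterior edges of $\Delta$ to be $a-1$ and $a$, so $S_D=\{a-2,a-1,a,a+1\}$. Let $\Delta'$ be the unique triangle of $T$ across $D$, with remaining sides $D_1$ (adjacent to edge $a-1$) and $D_2$ (adjacent to edge $a$). \emph{Case 1: $\Delta'$ has an exterior edge}, say $D_1$ is an edge of $X$ (the case of $D_2$ is symmetric). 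Then the point labeled $a-1$ lies in $S_{D'}$ for no diagonal $D'\in\U$ other than $D$: the edges $a-1,a$ meet at the apex of $\Delta$, which lies on no diagonal, while the only diagonal through the other endpoint of edge $a-1$ is $D$. Composing $\pi_T$ with the forgetful map $\M_{0,[n]}\to\M_{0,[n]\setminus\{a-1\}}$, the constraint for $D$ then just pins down the forgotten point and imposes nothing downstairs, so $d_T=d_{T''}$ where $T''$ is the triangulation of an $(n-1)$-gon obtained by deleting $D$ (merging $\Delta$ into $\Delta'$). Since $\Delta'$ gains a second exterior edge but stays non-internal and no other triangle changes, $I(T'')=I(T)$, and we finish by induction.

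\emph{Case 2: $\Delta'$ is internal}, so $D_1,D_2$ are genuine diagonals, with $S_{D_1}=\{a-2,a-1,m-1,m\}$ and $S_{D_2}=\{a,a+1,m-1,m\}$ for the third vertex $m$ of $\Delta'$. Here I would show $d_T=2\,d_{T''}$, where $T''$ is the triangulation of an $(n-2)$-gon obtained by contracting the quadrilateral $\Delta\cup\Delta'$: delete $D,D_1,D_2$ and insert the single diagonal $D^*$ with $S_{D^*}=\{a-2,a+1,m-1,m\}$. The mechanism: on the curve in $\M_{0,[n]}$ cut out by all constraints except that for $D$, the constraints for $D_1$ and $D_2$ express the points labeled $a-1$ and $a$ as M\"obius functions of the remaining points; substituting into the $D$-constraint gives, after M\"obius invariance, the condition that the cross-ratio of $a-2,a+1,m-1,m$ equal one of \emph{two} values $\mu_1\ne\mu_2$, both generic when the original target is. Since $d_{T''}$ is independent of the (generic) target point, $d_T=d_{T''}\big|_{\mu_1}+d_{T''}\big|_{\mu_2}=2\,d_{T''}$; and since $\Delta'$ is removed while nothing else changes internal status, $I(T'')=I(T)-1$, so we again finish by induction.

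The main obstacle is Case 2, specifically the factor-of-$2$ claim: one must verify that the relevant composite $\P^1\to\P^1$ (from the cross-ratio of $a-2,a+1,m-1,m$ to that of $a-2,a-1,a,a+1$, along the locus cut by the $D_1$- and $D_2$-constraints) is a degree-$2$ rational map, generically unramified, and that substituting the M\"obius expressions for the points $a-1,a$ into \emph{all} remaining constraints containing them (not just the one for $D$) reproduces exactly the constraints of $T''$ — the $4$-subsets $S_{D'}$ of diagonals through the contracted vertex get rewritten using $D_1$ or $D_2$, and this rewriting must be checked to be consistent and to keep the family laminar; some care is also needed to keep the new targets generic and to handle small sub-polygons. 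An alternative route, possibly cleaner, is to compute $d_T=\prod_{D\in\U}L_D\cap[\Mbar_{0,[n]}]$ on the Deligne--Mumford compactification, where $L_D$ is the pullback of $\mathcal{O}(1)$ under $\Mbar_{0,[n]}\to\Mbar_{0,S_D}\cong\P^1$; expanding each $L_D$ as a sum of boundary divisors, using the $2$-element partition of $S_D$ induced by the diagonal $D$, reduces the theorem to the combinatorial statement that the number of compatible (laminar) tuples of boundary divisors is $2^{I(T)}$, with the binary choices localized at the internal triangles.
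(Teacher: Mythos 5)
Your Case 1 is correct, and is essentially an instance of Lemma \ref{lem:3Overlap}: when the label $a-1$ occurs in no subset besides $S_D$, forgetting that point loses nothing and $d_T=d_{T''}$ with $I(T'')=I(T)$. Your degree-two computation in Case 2 is also fine as far as it goes: normalizing $p_{m-1},p_m,p_{a-2}$, the $D_1$- and $D_2$-constraints give $p_{a-1}=c_1$ and $p_a=c_2\mu$, and the $D$-cross-ratio becomes a generically degree-two rational function of $\mu$. The genuine gap is the identification of the residual problem with $T''$. When $\Delta'$ is internal, other diagonals of $T$ typically pass through the endpoints of $D$, i.e.\ the labels $a-1$ or $a$ occur in subsets besides $S_D,S_{D_1},S_{D_2}$ (e.g.\ subsets of the form $\{a-2,a-1,x,y\}$). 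Substituting the M\"obius expression for $p_{a-1}$ coming from the $D_1$-constraint (which is in terms of $p_{a-2},p_{m-1},p_m$) into such a constraint produces a codimension-one condition on the five points $a-2,m-1,m,x,y$ that is \emph{not} a cross-ratio condition on any $4$-subset: in the chart $p_{a-2}=\infty$, $p_{m-1}=0$, $p_m=1$ it is the line $p_y=\lambda p_x+c(1-\lambda)$ through $(c,c)$, where $c$ depends on the $D_1$-target, whereas the available cross-ratio conditions are lines through $(0,0)$ or $(1,1)$, coordinate lines, or a fixed conic. In particular it does not become the $T''$-constraint, whose subset is $\{a-2,a+1,x,y\}$. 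What your elimination actually shows (granting the genericity bookkeeping) is $d_T=2\,d_{[n],\U'}$ with $\U'=(\U\setminus\{S_D\})\cup\{\{a-2,a+1,m-1,m\}\}$; this $\U'$ is not a triangulation of the $n$-gon (the edges $a-2,a+1$ are not adjacent), and the remaining identity $d_{[n],\U'}=d_{T''}$ — which is equivalent in strength to the inductive step — is exactly what is missing. You flag this yourself as the main obstacle, so as written this is a plan rather than a proof.

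For comparison, the paper takes a different decomposition precisely to avoid this interaction: it inducts on $I(T)$, cutting at an \emph{internal} triangle into three sub-polygons, and proves the double multiplicativity $d_T=2\,d_{T_X}d_{T_Y}d_{T_Z}$ (Lemma \ref{lem:Double}) by running the recursion of Lemma \ref{lem:Alg} three times and using the vanishing Lemma \ref{lem:Surplus} to force all ``cross terms'' to vanish; that bookkeeping is what controls the constraints involving the eliminated labels, the issue your sketch leaves open. Your ear-surgery route could plausibly be completed with the same tools (e.g.\ evaluating $d_{[n],\U'}$ via Lemma \ref{lem:Alg} and Lemma \ref{lem:3Overlap}), but that is where the real work lies. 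Your suggested alternative on $\Mbar_{0,n}$ has a parallel issue: each $L_D$ is indeed a sum of boundary divisors, but the resulting product is not merely a count of compatible tuples, since repeated boundary divisors occur and contribute excess-intersection ($\psi$-class) terms; Lemma \ref{lem:Alg} is in effect the careful packaging of that computation.
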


\begin{figure}
\centering
    \begin{tikzpicture}[scale=.75]
        \foreach \x in {1,...,13} {
        \draw (360*\x/13+3*360/52:3)--(360*\x/13+360/13+3*360/52:3);
        \draw (360*\x/13+3*360/52:3) node {$\bullet$};
        \draw (-360*\x/13+360*4/13+360/52:3.3) node {\small \x};
        }
        \filldraw[green,opacity=.05] (-360*0/13+15*360/52:3)--(-360*3/13+15*360/52:3)--(-360*9/13+15*360/52:3)--cycle;
        \filldraw[blue,opacity=.05] (-360*0/13+15*360/52:3)--(-360*11/13+15*360/52:3)--(-360*9/13+15*360/52:3)--cycle;
        \filldraw[orange,opacity=.05] (-360*3/13+15*360/52:3)--(-360*5/13+15*360/52:3)--(-360*8/13+15*360/52:3)--cycle;
        \draw (-360*0/13+15*360/52:3)--(-360*3/13+15*360/52:3);
        \draw[red,thick] (-360*0/13+15*360/52:3)--(-360*2/13+15*360/52:3);
        \draw (-360*0/13+15*360/52:3)--(-360*11/13+15*360/52:3);
        \draw (-360*3/13+15*360/52:3)--(-360*9/13+15*360/52:3);
        \draw (-360*8/13+15*360/52:3)--(-360*3/13+15*360/52:3);
        \draw (-360*5/13+15*360/52:3)--(-360*8/13+15*360/52:3);
        \draw (-360*3/13+15*360/52:3)--(-360*5/13+15*360/52:3);
        \draw (-360*5/13+15*360/52:3)--(-360*7/13+15*360/52:3);
        \draw (-360*0/13+15*360/52:3)--(-360*9/13+15*360/52:3);
        \draw (-360*9/13+15*360/52:3)--(-360*11/13+15*360/52:3);
        \draw[->,
line join=round,
decorate, decoration={
    snake,
    segment length=8,
    amplitude=1.9,post=lineto,
    post length=2pt
}] (4,0)--(5,0);
        \draw (7,0) node {$
        \begin{array}{c}
        \textcolor{red}{\{1,2,3,13\}}\\
        \{1,3,4,13\}\\
        \{1,9,10,13\}\\
        \{1,11,12,13\}\\
        \{3,4,5,6\}\\
        \{3,4,8,9\}\\
        \{3,4,9,10\}\\
        \{5,6,7,8\}\\
        \{5,6,8,9\}\\
        \{9,10,11,12\}
        \end{array}
        $};
        \draw[->,
line join=round,
decorate, decoration={
    snake,
    segment length=8,
    amplitude=1.9,post=lineto,
    post length=2pt
}] (9,0)--(10,0);
        \draw (14,1) node {$\M_{0,[13]}$};
        \draw[->] (14,.5)--(14,-.5);
        \draw (13.5,0) node {$\pi_T$};
        \draw (14,-1) node {$\textcolor{red}{\M_{0,\{1,2,3,13\}}}\times\cdots\times\M_{0,\{9,10,11,12\}}$};
    \end{tikzpicture}
    \caption{The subsets $S_1,\ldots,S_{10}\subseteq[13]$ associated to a  triangulation of a 13-gon, and the corresponding map of moduli spaces. For illustration, an edge and its corresponding subset/factor are colored red. The three ``internal'' triangles are shaded --- Theorem \ref{thm:main} implies $\pi_T$ has degree $d_T=2^3=8.$}
    \label{fig:ExampleTriangulation}
\end{figure}
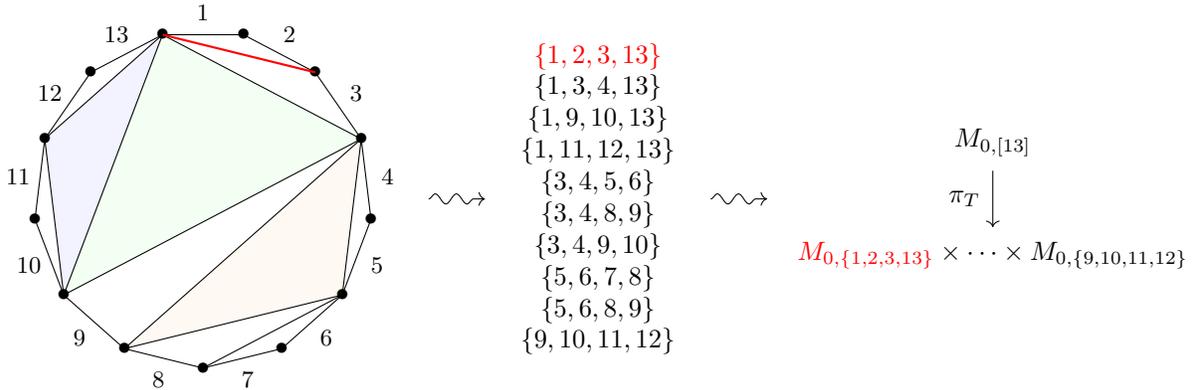

\subsection*{Motivation}
The more general ``cross-ratio degree problem'' is as follows. Let $S_1,\ldots,S_{n-3}\subseteq[n]$ with $\abs{S_j}=4$ for $1\le j\le n-3$, and let $\U=\{S_1,\ldots,S_{n-3}\}.$ The map $\pi_{\U}:\M_{0,[n]}\to\prod_{j=1}^k\M_{0,S_j}$ is a map of $(n-3)$-dimensional varieties, hence is generically finite. The dependence of the degree of $\pi_{\U}$ on the combinatorial data $\U$ is not well-understood.

The particular type of cross-ratio degree appearing in Theorem \ref{thm:main} arises in connection with the \emph{positive geometry} of $\M_{0,n}$ \cite{ArkaniHamedBaiLam2017}, which has recently been applied to a range of important applications. Brown \cite{Brown2009} studied coordinate systems arising from cross-ratios of this form under the name \emph{dihedral coordinates}, and used them to give affine charts on $\Mbar_{0,n}$ that are particularly well-behaved with respect to the cell decomposition of $\Mbar_{0,n}(\R)$, ultimately proving that all period integrals of $\Mbar_{0,n}$ are multiple zeta values. 

More recently, dihedral coordinates have been used extensively as a computational and conceptual tool in string theory, particularly in the study of \emph{tree-level string scattering amplitudes}. Dihedral coordinates satisfy a remarkable collection of relations known as the \emph{$u$-equations} that appear in the study of scattering amplitudes, and which exhibit $\M_{0,n}$ as a \emph{binary geometry} \cite{ArkaniHamedHeLamThomas2023}. One can give natural expressions in terms of dihedral coordinates for the two key objects in the Cachazo-He-Yuan formalism for ``gluon tree amplitudes in pure Yang-Mills theory'' \cite{CachazoHeYuan2014,BaadsgaardBjerrumBohrBourjailyDamgaardFeng2015,DolanGoddard2014}; the \emph{scattering equations} and the \emph{Parke-Taylor form}, a canonical top-degree differential form on $\M_{0,n}$ that gives $\M_{0,n}$ the structure of a \emph{positive geometry} \cite{ArkaniHamedBaiLam2017,BrownDupont2021,ArkaniHamedHeLam2021Stringy,Lam2022}. Theorem \ref{thm:main} elicits a range of natural questions related to this subject --- for example, writing the scattering equations and Parke-Taylor form in terms of dihedral coordinates relies on a choice of a triangulation of an $n$-gon with no internal triangles; Lam asked whether such expressions exist for general triangulations.


The cross-ratio degree problem appears to be ubiquitous --- special cases have been discovered and re-discovered, repeatedly and independently, by many people across a range of areas, including rigidity theory, polynomial root-finding algorithms, complex dynamics, combinatorics of matchings on bipartite graphs, birational geometry of $\Mbar_{0,n}$, and Gromov-Witten theory \cite{GalletGraseggerSchicho2020,JordanKaszanitzky2015,Reinke2022,RamadasSilversmith2020Per5,Silversmith2021,CastravetTevelev2013,Goldner2020Thesis}.

\begin{rem}
    Some of the lemmas stated in this paper apply more generally to certain natural generalizations of the cross-ratio degree problem, e.g. considering arbitrary products of forgetful maps $\M_{0,n}\to\prod_j\M_{0,S_j}$ with $\sum_j(S_j-3)=n-3$, see also \cite{BrakensiekEurLarsonLi2023}. We do not include the generalizations here as we do not know of an application.
\end{rem}

\subsection*{Idea of proof} We prove Theorem \ref{thm:main} by induction on $I(T)$, by cutting the $n$-gon into smaller polygons. Given an internal triangle $\Delta$ of $T$, we may produce three smaller polygons $T_X,T_Y,T_Z$ as in Figure \ref{fig:Decompose2}, satisfying $I(T)=I(T_X)+I(T_Y)+I(T_Z)+1$. We prove Lemma \ref{lem:Double}, which comprises most of the work of the paper --- this lemma applies more generally to cross-ratio degrees, and in this case implies $d_T=2\cdot d_{T_X}\cdot d_{T_Y}\cdot d_{T_Z}$, providing our inductive step. Our proof of Lemma \ref{lem:Double} is somewhat technical, and involves analyzing the first three iterations of a recursive algorithm for computing cross-ratio degrees first used by Goldner, using some basic lemmas on cross-ratio degrees developed by myself and others.

\begin{rem}
    A triangulation $T$ has an $[n]$-marked trivalent dual tree $\tau$, corresponding to a boundary point $P_\tau$ of $\Mbar_{0,n}$, or alternatively a top-dimensional cone $\sigma_\tau$ of the tropical moduli space $\M_{0,n}^\trop$. One can easily show that the extension of $\pi_T$ to $\Mbar_{0,n}$ is a local isomorphism at $P_\tau$, and correspondingly the tropicalized map $\pi_T^{\trop}:\M_{0,[n]}^{\trop}\to\prod_{S\in\U}\M_{0,S}^{\trop}$ maps $\sigma_\tau$ isomorphically (with determinant 1) onto a cone of the codomain. It would be interesting to understand this piece of the tropical picture better --- e.g. to classify combinatorially which \emph{other} top-dimensional cones map to $\pi_T(\sigma_\tau)$.
\end{rem}

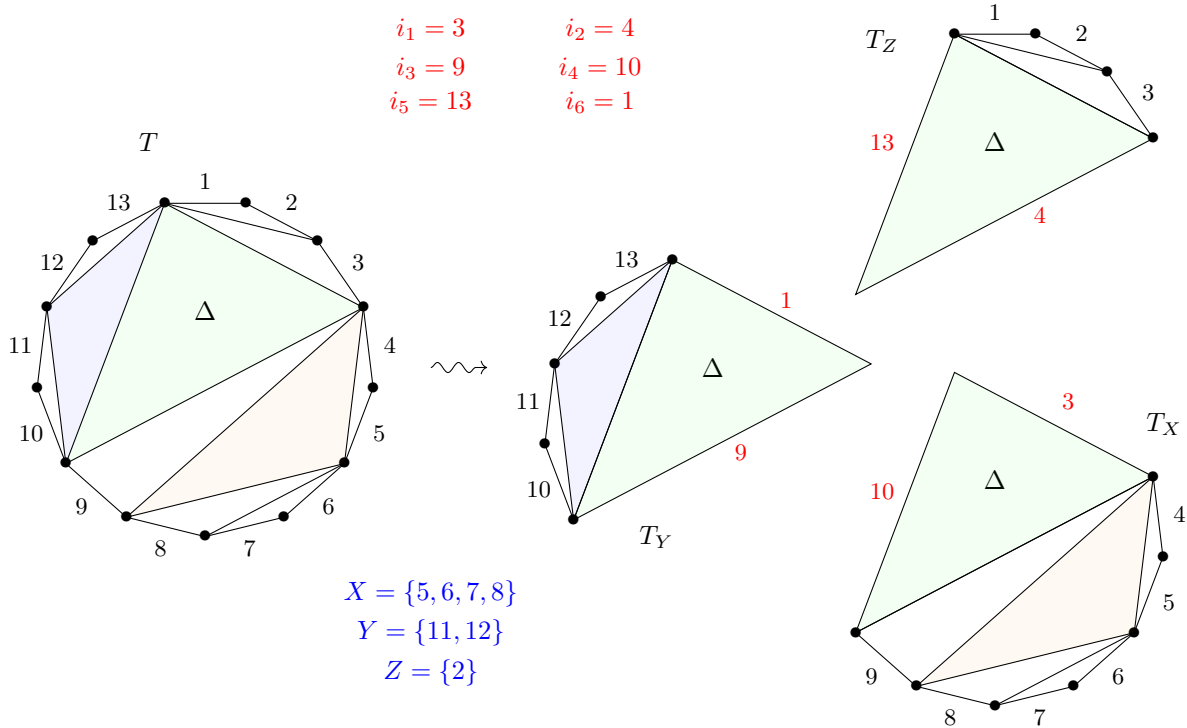
\begin{figure}
\centering
    \begin{tikzpicture}[scale=.75]
        \foreach \x in {1,...,13} {
        \draw (360*\x/13+3*360/52:3)--(360*\x/13+360/13+3*360/52:3);
        \draw (360*\x/13+3*360/52:3) node {$\bullet$};
        \draw (-360*\x/13+360*4/13+360/52:3.3) node {\small \x};
        }
        \filldraw[green,opacity=.05] (-360*0/13+15*360/52:3)--(-360*3/13+15*360/52:3)--(-360*9/13+15*360/52:3)--cycle;

        \draw (0,1) node {$\Delta$};
        \draw (9,0) node {$\Delta$};
        \draw (14,4) node {$\Delta$};
        \draw (14,-2) node {$\Delta$};
        \filldraw[blue,opacity=.05] (-360*0/13+15*360/52:3)--(-360*11/13+15*360/52:3)--(-360*9/13+15*360/52:3)--cycle;
        \filldraw[orange,opacity=.05] (-360*3/13+15*360/52:3)--(-360*5/13+15*360/52:3)--(-360*8/13+15*360/52:3)--cycle;
        \draw (-360*0/13+15*360/52:3)--(-360*3/13+15*360/52:3);
        \draw (-360*0/13+15*360/52:3)--(-360*2/13+15*360/52:3);
        \draw (-360*0/13+15*360/52:3)--(-360*11/13+15*360/52:3);
        \draw (-360*3/13+15*360/52:3)--(-360*9/13+15*360/52:3);
        \draw (-360*8/13+15*360/52:3)--(-360*3/13+15*360/52:3);
        \draw (-360*5/13+15*360/52:3)--(-360*8/13+15*360/52:3);
        \draw (-360*3/13+15*360/52:3)--(-360*5/13+15*360/52:3);
        \draw (-360*5/13+15*360/52:3)--(-360*7/13+15*360/52:3);
        \draw (-360*0/13+15*360/52:3)--(-360*9/13+15*360/52:3);
        \draw (-360*9/13+15*360/52:3)--(-360*11/13+15*360/52:3);
        \draw[->,
line join=round,
decorate, decoration={
    snake,
    segment length=8,
    amplitude=1.9,post=lineto,
    post length=2pt
}] (4,0)--(5,0);
\foreach \x in {1,...,3} {
        \draw ($(14,3)+(-360*\x/13+3*360/13+3*360/52:3)$)--($(14,3)+(-360*\x/13+4*360/13+3*360/52:3)$);
        \draw ($(14,3)+(-360*\x/13+3*360/13+3*360/52:3)$) node {$\bullet$};
        \draw ($(14,3)+(-360*\x/13+360*4/13+360/52:3.3)$) node {\small \x};
        }

        \foreach \x in {4,...,9} {
        \draw ($(14,-3)+(-360*\x/13+3*360/13+3*360/52:3)$)--($(14,-3)+(-360*\x/13+4*360/13+3*360/52:3)$);
        \draw ($(14,-3)+(-360*\x/13+3*360/13+3*360/52:3)$) node {$\bullet$};
        \draw ($(14,-3)+(-360*\x/13+360*4/13+360/52:3.3)$) node {\small \x};
        }

        \foreach \x in {10,...,13} {
        \draw ($(9,-1)+(-360*\x/13+3*360/13+3*360/52:3)$)--($(9,-1)+(-360*\x/13+4*360/13+3*360/52:3)$);
        \draw ($(9,-1)+(-360*\x/13+3*360/13+3*360/52:3)$) node {$\bullet$};
        \draw ($(9,-1)+(-360*\x/13+360*4/13+360/52:3.3)$) node {\small \x};
        }
        \draw[fill=green,fill opacity=.05] ($(14,3)+(-360*0/13+15*360/52:3)$)--($(14,3)+(-360*3/13+15*360/52:3)$)--($(14,3)+(-360*9/13+15*360/52:3)$)--cycle;

        \draw[fill=green,fill opacity=.05] ($(14,-3)+(-360*0/13+15*360/52:3)$)--($(14,-3)+(-360*3/13+15*360/52:3)$)--($(14,-3)+(-360*9/13+15*360/52:3)$)--cycle;

        \draw[fill=green,fill opacity=.05] ($(9,-1)+(-360*0/13+15*360/52:3)$)--($(9,-1)+(-360*3/13+15*360/52:3)$)--($(9,-1)+(-360*9/13+15*360/52:3)$)--cycle;
        \filldraw[blue,opacity=.05] ($(9,-1)+(-360*0/13+15*360/52:3)$)--($(9,-1)+(-360*11/13+15*360/52:3)$)--($(9,-1)+(-360*9/13+15*360/52:3)$)--cycle;
        \filldraw[orange,opacity=.05] ($(14,-3)+(-360*3/13+15*360/52:3)$)--($(14,-3)+(-360*5/13+15*360/52:3)$)--($(14,-3)+(-360*8/13+15*360/52:3)$)--cycle;
        \draw ($(14,3)+(-360*0/13+15*360/52:3)$)--($(14,3)+(-360*3/13+15*360/52:3)$);
        \draw ($(14,3)+(-360*0/13+15*360/52:3)$)--($(14,3)+(-360*2/13+15*360/52:3)$);
        \draw +($(9,-1)+(-360*0/13+15*360/52:3)$)--+($(9,-1)+(-360*11/13+15*360/52:3)$);
        \draw +($(14,-3)+(-360*3/13+15*360/52:3)$)--+($(14,-3)+(-360*9/13+15*360/52:3)$);
        \draw +($(14,-3)+(-360*8/13+15*360/52:3)$)--+($(14,-3)+(-360*3/13+15*360/52:3)$);
        \draw +($(14,-3)+(-360*5/13+15*360/52:3)$)--+($(14,-3)+(-360*8/13+15*360/52:3)$);
        \draw +($(14,-3)+(-360*3/13+15*360/52:3)$)--+($(14,-3)+(-360*5/13+15*360/52:3)$);
        \draw +($(14,-3)+(-360*5/13+15*360/52:3)$)--+($(14,-3)+(-360*7/13+15*360/52:3)$);
        \draw +($(9,-1)+(-360*0/13+15*360/52:3)$)--+($(9,-1)+(-360*9/13+15*360/52:3)$);
        \draw +($(9,-1)+(-360*9/13+15*360/52:3)$)--+($(9,-1)+(-360*11/13+15*360/52:3)$);

        \draw ($(14,3)+(3*360/13+3*360/52:3)$) node {$\bullet$};

        \draw ($(14,-3)+(-360*3/13+3*360/13+3*360/52:3)$) node {$\bullet$};

        \draw ($(9,-1)+(-360*9/13+3*360/13+3*360/52:3)$) node {$\bullet$};

        \draw (10.3,1.2) node {\small $\textcolor{red}{1}$};
        \draw (9.5,-1.5) node {\small $\textcolor{red}{9}$};

        \draw (12,4) node {\small $\textcolor{red}{13}$};
        \draw (14.8,2.7) node {\small $\textcolor{red}{4}$};

        \draw (15.3,-.6) node {\small $\textcolor{red}{3}$};
        \draw (12,-2.2) node {\small $\textcolor{red}{10}$};
        \draw[red] (4,6) node {$i_1=3$};
        \draw[red] (4,5.3) node {$i_3=9$};
        \draw[red] (4,4.7) node {$i_5=13$};
        \draw[red] (7,6) node {$i_2=4$};
        \draw[red] (7,5.3) node {$i_4=10$};
        \draw[red] (7,4.7) node {$i_6=1$};
        \draw[blue] (4,-4) node {$X=\{5,6,7,8\}$};
        \draw[blue] (4,-4.7) node {$Y=\{11,12\}$};
        \draw[blue] (4,-5.4) node {$Z=\{2\}$};
        \draw (-1,4) node {$T$};
        \draw (8,-3) node {$T_Y$};
        \draw (17,-1) node {$T_X$};
        \draw (12,5.8) node {$T_Z$};
    \end{tikzpicture}
    \caption{Decomposing the 13-gon as in the proof of Theorem \ref{thm:main}.}
    \label{fig:Decompose2}
\end{figure}

\subsection*{An open problem}
    Let $C(n)$ denote the largest cross-ratio degree on $\M_{0,n}.$ Inscribing a $\floor{n/2}$-gon inside the $n$-gon yields triangulations $T$ with $I(T)=\floor{n/2}-2$, and Theorem \ref{thm:main} then implies $C(n)\ge2^{\floor{n/2}-2}$. One may also show $C(n)\le2^{n-5}$ for $n\ge5$, see \cite[Rem. 4.3]{Silversmith2021}. What are the asymptotics of $C(n)$?
    
    The lower bound above is the best one I know of, but it is not sharp --- here is some experimental data: \begin{center}
        \begin{tabular}{c||c|c|c|c|c|c|c|c|c|c|c|c}
             $n$&3&4&5&6&7&8&9&10&11&12&13&14\\\hline
             $C(n)$&1&1&1&2&$\ge$2&$\ge$4&$\ge$6&$\ge$10&$\ge$13&$\ge$20&$\ge$28&$\ge$41
        \end{tabular}
    \end{center}
It is likely that the bounds in the table are correct for $n\le10.$ I assume it is a coincidence that the data is compatible with (a shift of) OEIS sequence A034406.

\subsection*{Acknowledgements}
This paper answers a question that Thomas Lam asked me at the Institute for Computational and Experimental Research in Mathematics (ICERM) in Providence, Rhode Island, while we were in residence for the 3-week Combinatorial Algebraic Geometry Spring 2021 Reunion Event in August 2023 (supported by NSF grant DMS-1929284). I am grateful to ICERM for their hospitality, to the organizers of the event for the opportunity to attend the wonderful program, to Thomas for bringing this class of cross-ratio degrees to my attention and explaining the context, and to Matt Larson for useful conversations. (In fact, Matt speculated that Theorem \ref{thm:main} holds.)

\section{Basic Lemmas}
We will need several standard facts about cross-ratio degrees. The first is the following vanishing condition.
\begin{lem}[\cite{Silversmith2021}, Prop. 4.1]\label{lem:Surplus}
    Let $\U\in\binom{[n]}{4}^{n-3}$. If there exists a nonempty subset $\U'\subseteq\U$ such that $\bigcup_{S\in\U'}S<\abs{\U'}+3$, then $d_{[n],\U}=0.$
\end{lem}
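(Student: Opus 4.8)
The plan is to reduce the vanishing claim to a dimension count on $\Mbar_{0,n}$, or more directly on $\M_{0,[n]}$, by exhibiting a subvariety on which the fibers of $\pi_\U$ (or a coordinate projection of it) are positive-dimensional. Suppose $\U'=\{S_1,\ldots,S_k\}\subseteq\U$ satisfies $\left|\bigcup_{i=1}^k S_i\right|=:m< k+3$. Write $W=\bigcup_{i=1}^k S_i\subseteq[n]$, so $|W|=m$ and $k\ge m-2$. First I would consider the forgetful map $\rho:\M_{0,[n]}\to\M_{0,W}$; since $|W|=m$, the target has dimension $m-3$. The $k$ cross-ratios indexed by $S_1,\ldots,S_k$ all factor through $\rho$ (each $S_i\subseteq W$), so the partial product map $\M_{0,[n]}\to\prod_{i=1}^k\M_{0,S_i}$ factors as $\M_{0,[n]}\xrightarrow{\rho}\M_{0,W}\to\prod_{i=1}^k\M_{0,S_i}$. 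The composite $\M_{0,W}\to\prod_{i=1}^k\M_{0,S_i}$ is a map from an $(m-3)$-dimensional variety to a $k$-dimensional variety with $k\ge m-2>m-3$, hence its image has dimension at most $m-3<k$, so it is not dominant.

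Now I would conclude: the full map $\pi_\U:\M_{0,[n]}\to\prod_{S\in\U}\M_{0,S}$ composed with the projection onto the factors $\prod_{i=1}^k\M_{0,S_i}$ is not dominant (its image lies in the proper subvariety $\operatorname{image}(\M_{0,W}\to\prod_{i=1}^k\M_{0,S_i})$ times nothing, i.e.\ in a proper closed subset). Therefore $\pi_\U$ itself is not dominant, so a general point of the target $\prod_{S\in\U}\M_{0,S}$ is not in the image, and the general fiber is empty; thus $d_{[n],\U}=0$. The one technical point to get right is the standard fact that a general fiber of $\pi_\U$ is either empty or finite (it is generically finite as a map of equidimensional varieties when $d_{[n],\U}\ne 0$), so ``not dominant'' genuinely forces $d_{[n],\U}=0$ rather than leaving room for positive-dimensional fibers over the image; this is immediate from upper-semicontinuity of fiber dimension together with the equidimensionality.

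The main obstacle, such as it is, is purely bookkeeping: checking that every $S_i\in\U'$ really does have $S_i\subseteq W$ (true by definition of $W$) and that the forgetful maps compose correctly, i.e.\ that the cross-ratio coordinate on $\M_{0,S_i}$ pulled back to $\M_{0,[n]}$ equals its pullback through $\M_{0,W}$ — this is a functoriality statement for forgetful maps that I would cite as standard. Since this lemma is quoted from \cite{Silversmith2021}, I would in fact simply invoke it, but the short argument above is the natural self-contained proof. Note the strict inequality in the hypothesis is what makes $k>m-3$ and hence gives non-dominance; with equality $k=m-3$ the map could be (and generically is) dominant, which is why the hypothesis is phrased the way it is.
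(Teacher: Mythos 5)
Your argument is correct, and it is essentially the same dimension count as in the cited source (the paper itself only quotes \cite{Silversmith2021}, Prop.~4.1 without reproducing the proof): the cross-ratios indexed by $\U'$ factor through the forgetful map to $\M_{0,\bigcup_{S\in\U'}S}$, whose dimension $\abs{\bigcup_{S\in\U'}S}-3<\abs{\U'}$ forces the corresponding coordinate projection of $\pi_\U$, and hence $\pi_\U$ itself, to be non-dominant, so the general fiber is empty and $d_{[n],\U}=0$. No gaps; your handling of the ``non-dominant $\Rightarrow$ degree zero'' step is exactly the right technical point to flag.
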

The following recursive algorithm for cross-ratio degrees is a straightforward application of standard facts\footnote{Explicitly, if $\rho_S:\Mbar_{0,[n]}\to\Mbar_{0,S}$ is the forgetful map that remembers the marks in $S$, then \eqref{eq:Alg} expresses the restriction of $\prod_{2\le j\le n-3}\rho_{S_j}^*[pt]$ to the sum of boundary divisors $\rho_{S_1}^*(D_{i_1i_2|i_3i_4})$, where $D_{i_1i_2|i_3i_4}$ is a boundary point in $\Mbar_{0,S_1}\cong\Mbar_{0,4}$.} about the cohomology of $\Mbar_{0,n}$. The algorithm seems to have first been written down (although somewhat implicitly) in \cite[Cor. 3.2.22]{Goldner2020Thesis}. See also \cite{GalletGraseggerSchicho2020,GriffinLevinsonRamadasSilversmith2024}. 
\begin{lem}{\cite{Goldner2020Thesis}} \label{lem:Alg} Let $\U=\{S_1,\ldots,S_{n-3}\}\in\binom{[n]}{4}^{n-3}$, and suppose $S_1=\{i_1,i_2,i_3,i_4\}.$ Then
\begin{align}\label{eq:Alg}
    d_{[n],\U}=\sum_{\substack{[n]=A_1\sqcup A_2\\i_1,i_2\in A_1\\i_3,i_4\in A_2\\\abs{S_j\cap A_1}\ne2\text{ \emph{for} }2\le j\le n-3}}d_{A_1\cup\{\star\},\U_1}\cdot d_{A_2\cup\{\dagger\},\U_2}.
\end{align} Here $\U_1$ is obtained by taking all $S\in\U$ such that $\abs{S\cap A_1}\ge 3$, and in each such $S$, replacing any element (unique if it exists) of $A_2$ with $\star.$ Similarly $\U_2$ is obtained by taking all $S\in\U$ with $\abs{S\cap A_2}\ge 3$, and replacing elements of $A_1$ with $\dagger.$ In the summand, $d_{[n],\U}$ is taken to be zero if $\abs{\U}\ne n-3$ --- in other words, we only need to consider terms where $\abs{\U_1}\in\binom{A_1\cup\{\star\}}{4}^{\abs{A_1}-2}$ and, correspondingly, $\abs{\U_2}\in\binom{A_2\cup\{\dagger\}}{4}^{\abs{A_2}-2}$.
\end{lem}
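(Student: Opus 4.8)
The plan is to translate $d_{[n],\U}$ into an intersection number on the compactification $\Mbar_{0,n}$ and then expand it along the boundary, following the footnote. Write $\rho_S\colon\Mbar_{0,n}\to\Mbar_{0,S}$ for the forgetful morphism extending $\M_{0,[n]}\to\M_{0,S}$, and let $[pt]\in H^2(\Mbar_{0,S})$ be the point class (recall $\Mbar_{0,S}\cong\P^1$). The first step is the identity
\[
d_{[n],\U}=\int_{\Mbar_{0,n}}\ \prod_{S\in\U}\rho_S^*[pt],
\]
valid for every $\U\in\binom{[n]}{4}^{n-3}$, and equal to $0$ whenever $\abs{\U}\ne n-3$ (then the integrand has the wrong degree). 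Indeed $\prod_S\rho_S\colon\Mbar_{0,n}\to\prod_S\Mbar_{0,S}$ is proper and generically finite of degree $d_{[n],\U}$ (interpreted as $0$ if it is not dominant, in which case the integrand vanishes for dimension reasons), and since every boundary divisor of $\Mbar_{0,n}$ has dimension $n-4$, the image of the boundary is a proper closed subset of $\prod_S\Mbar_{0,S}$; so the preimage of a general point lies in $\M_{0,[n]}$ and is $d_{[n],\U}$ reduced points. This identity will also be applied, at the end, on each of the two smaller moduli spaces below.

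Next I would compute $\rho_{S_1}^*[pt]$: representing $[pt]$ on $\Mbar_{0,S_1}\cong\Mbar_{0,4}$ by the boundary point $D_{i_1i_2|i_3i_4}$ and using the standard relations in the intersection ring of $\Mbar_{0,n}$ --- which express the pullback of a boundary point of $\Mbar_{0,4}$ along a forgetful map as a multiplicity-one sum of boundary divisors --- one obtains
\[
\rho_{S_1}^*[pt]=\sum_{\substack{[n]=A_1\sqcup A_2\\i_1,i_2\in A_1,\ i_3,i_4\in A_2}}[D_{A_1|A_2}].
\]
Substituting into the first identity and using the projection formula reduces the computation to evaluating, for each such bipartition, the integral over $D_{A_1|A_2}\cong\Mbar_{0,A_1\cup\{\star\}}\times\Mbar_{0,A_2\cup\{\dagger\}}$ of $\bigl(\prod_{2\le j\le n-3}\rho_{S_j}^*[pt]\bigr)\big|_{D_{A_1|A_2}}$.

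The main work is to identify each restriction $(\rho_{S_j}^*[pt])\big|_{D_{A_1|A_2}}$, $j\ge2$, using the compatibility of forgetful maps with the boundary gluing maps; three cases occur according to whether $\abs{S_j\cap A_1}$ is $\le1$, $=2$, or $\ge3$. If $\abs{S_j\cap A_1}=2$, the four marks of $S_j$ are split $2$--$2$ by the node, so the composite $D_{A_1|A_2}\hookrightarrow\Mbar_{0,n}\xrightarrow{\rho_{S_j}}\Mbar_{0,S_j}$ is constant and $(\rho_{S_j}^*[pt])\big|_{D_{A_1|A_2}}=0$ --- which is exactly why such $A_1$ are dropped from the sum in \eqref{eq:Alg}. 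If $\abs{S_j\cap A_1}\ge3$, the composite factors through the projection $p_1$ to $\Mbar_{0,A_1\cup\{\star\}}$ followed by the forgetful map to $\Mbar_{0,S_j'}$, where $S_j'$ is $S_j$ with its unique element of $A_2$ (if present) replaced by $\star$; hence $(\rho_{S_j}^*[pt])\big|_{D_{A_1|A_2}}=p_1^*\rho_{S_j'}^*[pt]$. The case $\abs{S_j\cap A_1}\le1$ (so $\abs{S_j\cap A_2}\ge3$) is symmetric, using $p_2$ and $\dagger$. Thus on a surviving $D_{A_1|A_2}$ each $S_j$ ($j\ge2$) contributes to exactly one factor, and $\prod_{j\ge2}(\rho_{S_j}^*[pt])\big|_{D_{A_1|A_2}}=p_1^*\bigl(\prod_{S\in\U_1}\rho_S^*[pt]\bigr)\cdot p_2^*\bigl(\prod_{S\in\U_2}\rho_S^*[pt]\bigr)$, with $\U_1,\U_2$ exactly the collections in the statement. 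By Künneth its integral over $D_{A_1|A_2}$ factors as a product of integrals over $\Mbar_{0,A_1\cup\{\star\}}$ and $\Mbar_{0,A_2\cup\{\dagger\}}$, nonzero only when $\abs{\U_1}=\abs{A_1}-2$ --- equivalently, since $\abs{\U_1}+\abs{\U_2}=n-4=(\abs{A_1}-2)+(\abs{A_2}-2)$, when $\abs{\U_2}=\abs{A_2}-2$ --- which reproduces the stated convention. Applying the first identity to each factor turns this into $d_{A_1\cup\{\star\},\U_1}\cdot d_{A_2\cup\{\dagger\},\U_2}$, and summing over $A_1$ yields \eqref{eq:Alg}.

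I expect the genuine obstacle to be this third paragraph: carefully pinning down the identification $D_{A_1|A_2}\cong\Mbar_{0,A_1\cup\{\star\}}\times\Mbar_{0,A_2\cup\{\dagger\}}$ and the induced form of each $\rho_{S_j}$ on it, and then tracking the $\star/\dagger$ substitutions so that the collections produced coincide with $\U_1,\U_2$ on the nose --- including the vanishing when $\abs{S_j\cap A_1}=2$ and the degree bookkeeping reproducing the convention $d_{[n],\U}=0$ for $\abs{\U}\ne n-3$. The multiplicity-one description of $\rho_{S_1}^*(D_{i_1i_2|i_3i_4})$ is classical but also deserves care; the remaining ingredients --- the projection formula, Künneth, and the dictionary between degrees of products of forgetful maps and intersection numbers --- are routine.
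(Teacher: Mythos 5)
Your argument is correct and is precisely the route the paper itself indicates: the paper proves this lemma only by the footnote sketch (citing Goldner), namely interpreting $d_{[n],\U}$ as $\int_{\Mbar_{0,n}}\prod_{S\in\U}\rho_S^*[pt]$, expanding $\rho_{S_1}^*[pt]$ as the multiplicity-one sum of boundary divisors $D_{A_1|A_2}$ with $i_1,i_2\in A_1$, $i_3,i_4\in A_2$, and restricting the remaining classes to each boundary divisor via the compatibility of forgetful and gluing maps. Your fleshed-out version (including the vanishing when $\abs{S_j\cap A_1}=2$, the $\star/\dagger$ relabeling, and the dimension bookkeeping forcing $\abs{\U_1}=\abs{A_1}-2$) matches that intended proof, so there is nothing to correct.
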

Note that $\U_1,\U_2$ are naturally identified with disjoint subsets of $\U,$ whose union is $\U\setminus\{S_1\}$.
\begin{rem}\label{rem:BuildTrees}
It is helpful to keep in mind the following pictorial interpretation of Lemma \ref{lem:Alg}. We may interpret the index set of the sum in \eqref{eq:Alg} as the set of $[n]$-marked graphs $\Gamma$ (i.e. graphs together with additional ``half-edges'' labeled by $[n]$) of the form
\begin{center}
    \begin{tikzpicture}
        \draw (0,0) node {$\bullet$} -- (2,0) node {$\bullet$};
        \foreach \a in {-3,...,3} { \draw (0,0)--++(180+10*\a:.7);
        \draw (2,0)--++(10*\a:.7);
        }
        \draw (-1.5,0) node {$A_1$};
        \draw (150:.9) node {\tiny $i_1$};
        \draw (210:.9) node {\tiny $i_2$};
        \draw (2,0)++(30:.9) node {\tiny $i_3$};
        \draw (2,0)++(-30:.9) node {\tiny $i_4$};
        \draw (3.6,0) node {$A_2\thickspace$};
        \draw (.4,-.15) node {\tiny $\star$};
        \draw (0,.2) node {\tiny $v_1$};
        \draw (2,.2) node {\tiny $v_2$};
        \draw (1.6,-.15) node {\tiny $\dagger$};
        \draw (-1.2,0) node {\resizebox{\width}{20pt}{$\{$}};
        \draw (3.2,0) node {\resizebox{\width}{20pt}{$\}$}};
    \end{tikzpicture}
\end{center}
where we require that for each $S_j$ with $2\le j\le n$, there is a unique vertex $v$ of $\Gamma$ such that the four paths from $v$ to the elements of $S_j$ start along four \emph{distinct} (half-)edges incident to $v$. We then say $S_j$ is supported on $v$. Note that if $S_j$ is supported on $v_1$ and $\abs{S_j\cap A_1}=3,$ then one of the four half-edges in question will be $\star$ --- this explains the ``renaming'' process in the lemma. This associates to each vertex a collection of 4-element subsets of the half-edges incident to that vertex, and the summand in \eqref{eq:Alg} corresponding to $\Gamma$ is the product of the two associated cross-ratio degrees. (We have the right \emph{number} of 4-element subsets by the assumption $\abs{\U_1}=\abs{A_1}-2.$ Note that these two cross-ratio degrees are defined on underlying sets of cardinality strictly less than $n$.

The reason for this interpretation is as follows. Suppose that for some $\Gamma,$ we apply Lemma \ref{lem:Alg} \emph{again} to expand the factor $d_{A_1\cup\{\star\},\U_1}$ in $d_{A_1\cup\{\star\},\U_1}\cdot d_{A_2\cup\{\dagger\},\U_2}$. The answer will be a sum over graphs of the form \begin{align*}
    \raisebox{-45pt}{\begin{tikzpicture}
        \draw (0,0) node {$\bullet$} -- (2,0) node {$\bullet$}-- (4,0) node {$\bullet$};
        \foreach \a in {-3,...,3} { \draw (0,0)--++(180+10*\a:.7);
        \draw (4,0)--++(10*\a:.7);
        \draw (2,0)--++(-90+10*\a:.7);
        }
        \draw (0,.2) node {\tiny $v_{1,1}$};
        \draw (2,.2) node {\tiny $v_{1,2}$};
        \draw (4,.2) node {\tiny $v_{2}$};
        \draw (-1.7,0) node {$A_{1,1}$};
        \draw (2,-1.4) node {$A_{1,2}$};
        \draw (2,-1) node {\rotatebox{270}{\resizebox{\width}{20pt}{$\}$}}};
        \draw (4,0)++(30:.9) node {\tiny $i_3$};
        \draw (4,0)++(-30:.9) node {\tiny $i_4$};
        \draw (5.8,0) node {$A_2\thickspace,$};
        \draw (0.4,-.15) node {\tiny $\clubsuit$};
        \draw (1.6,-.15) node {\tiny $\spadesuit$};
        \draw (2.4,-.15) node {\tiny $\star$};
        \draw (3.6,-.15) node {\tiny $\dagger$};
        \draw (-1.2,0) node {\resizebox{\width}{20pt}{$\{$}};
        \draw (5.2,0) node {\resizebox{\width}{20pt}{$\}$}};
    \end{tikzpicture}}
\end{align*}
where again the summand is a product of cross-ratio degrees, each consisting of subsets supported on a vertex. Iterating this process of ``splitting a vertex'' gives one strategy of computing cross-ratio degrees --- the eventual output will be a collection of trivalent trees with no nontrivial cross-ratio degrees, and the number of these trees will be $d_{[n],\U}.$
\end{rem}
\begin{rem}
    The marked trees of Remark \ref{rem:BuildTrees} can be thought of as tropical genus-zero curves --- this connection is given a full geometric explanation in \cite{GriffinLevinsonRamadasSilversmith2024}. 
\end{rem}

We will also need the following multiplicativity property.
\begin{lem}\label{lem:3Overlap}
     Let $\U\in\binom{[n]}{4}^{n-3}.$ Suppose there exists a partition $[n]=\{i_1,i_2,i_3\}\sqcup X\sqcup Y$ such that for all $S\in\U$, we have either $S\subseteq\{i_1,i_2,i_3\}\cup X$ or $S\subseteq\{i_1,i_2,i_3\}\cup Y$. Define $\U_X=\{S\in\U:S\subseteq\{i_1,i_2,i_3\}\cup X\}$, and similarly $\U_Y$. Then $$d_{[n],\U}=\begin{cases}
         d_{\{i_1,i_2,i_3\}\cup X,\U_X}\cdot d_{\{i_1,i_2,i_3\}\cup Y,\U_Y}&\abs{\U_X}=\abs{X}\\
         0&\text{otherwise}.
     \end{cases}.$$
\end{lem}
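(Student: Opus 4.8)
The plan is to derive this multiplicativity statement directly from the recursive algorithm of Lemma \ref{lem:Alg}. First I would handle the degenerate case: if $\abs{\U_X} \ne \abs{X}$, then since $\abs{\U} = n-3 = \abs{X} + \abs{Y}$ and $\abs{\U} = \abs{\U_X} + \abs{\U_Y}$ (the hypotheses force every $S \in \U$ into exactly one of $\U_X, \U_Y$, because a set $S \subseteq \{i_1,i_2,i_3\}\cup X$ \emph{and} $S \subseteq \{i_1,i_2,i_3\}\cup Y$ would have $\abs S \le 3$), we have either $\abs{\U_X} > \abs X$ or $\abs{\U_Y} > \abs Y$. Say $\abs{\U_X} > \abs X$. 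Then $\U' = \U_X$ is a nonempty subset with $\bigl|\bigcup_{S \in \U_X} S\bigr| \le \abs{\{i_1,i_2,i_3\}\cup X} = 3 + \abs X < \abs{\U_X} + 3$, so Lemma \ref{lem:Surplus} gives $d_{[n],\U} = 0$, as claimed. So from now on assume $\abs{\U_X} = \abs X$ and $\abs{\U_Y} = \abs Y$.

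For the main case I would argue by induction on $n$, or equivalently on $\abs X + \abs Y$. The base case is when one of $X, Y$ is empty, say $Y = \emptyset$: then $\U_Y = \emptyset$, $\{i_1,i_2,i_3\}\cup Y$ has three elements, $d_{\{i_1,i_2,i_3\},\emptyset} = 1$ by convention (it is a point), and the statement is a tautology. For the inductive step, pick some $S_1 \in \U_Y$ (if $\U_Y = \emptyset$ we are in the base case with the roles reversed, or both empty), write $S_1 = \{j_1,j_2,j_3,j_4\}$, and apply Lemma \ref{lem:Alg} with this choice of first subset. The key observation is that the partitions $[n] = A_1 \sqcup A_2$ contributing nonzero terms must be compatible with the splitting: I claim that if $A_1 \sqcup A_2$ is an admissible splitting (meaning $j_1,j_2 \in A_1$, $j_3,j_4 \in A_2$, and $\abs{S\cap A_1}\ne 2$ for all other $S \in \U$), then $X$ is entirely contained in $A_1$ or entirely in $A_2$, and similarly for $\{i_1,i_2,i_3\}$ — indeed the condition $\abs{S \cap A_1} \ne 2$ applied to the sets in $\U_X$ (which collectively ``connect up'' all of $X$ to $\{i_1,i_2,i_3\}$, using $\abs{\U_X} = \abs X$, via Lemma \ref{lem:Surplus}-type connectivity) forces this. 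Consequently the renamed data $\U_1$ on $A_1 \cup \{\star\}$ and $\U_2$ on $A_2 \cup \{\dagger\}$ each again satisfy the hypothesis of Lemma \ref{lem:3Overlap} — one side keeps all of $X$ together with $\{i_1,i_2,i_3\}$ (or its image under renaming) and an appropriately modified three-element "core", while the cross-ratio information about $Y$ is split between the two factors exactly as in the ordinary application of Lemma \ref{lem:Alg} to $d_{\{i_1,i_2,i_3\}\cup Y,\U_Y}$. Applying the inductive hypothesis to $d_{A_1\cup\{\star\},\U_1}$ and $d_{A_2\cup\{\dagger\},\U_2}$ factors each summand as $(\text{an }X\text{-part})\cdot(\text{a }Y\text{-part})$, and resumming, the $X$-parts collect to give the single factor $d_{\{i_1,i_2,i_3\}\cup X,\U_X}$ (there is essentially one choice on that side, consistent with $\abs{\U_X} = \abs X$) while the $Y$-parts reassemble precisely the sum in Lemma \ref{lem:Alg} computing $d_{\{i_1,i_2,i_3\}\cup Y,\U_Y}$. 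This yields the desired product.

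The step I expect to be the main obstacle is the bookkeeping in the inductive step: carefully verifying that admissible splittings $A_1 \sqcup A_2$ biject with admissible splittings in the smaller problem for $\U_Y$, that the renaming of $\U$ restricts correctly to the renaming of $\U_X$ and $\U_Y$, and that no spurious cross-terms survive (i.e. that one genuinely cannot have $X$ straddle $A_1$ and $A_2$). Here the tree picture of Remark \ref{rem:BuildTrees} is the cleanest language: the hypothesis says every $S \in \U$ is supported at a vertex lying in the "$X$-side subtree together with the $\{i_1,i_2,i_3\}$-legs" or the analogous "$Y$-side subtree", and since $\abs{\U_X} = \abs X$ the $X$-side is rigid enough that splitting a $Y$-side vertex never interacts with it. I would phrase the argument in terms of counting such marked trees — the trees computing $d_{[n],\U}$ are exactly the "gluings" along the common three legs $\{i_1,i_2,i_3\}$ of a tree computing $d_{\{i_1,i_2,i_3\}\cup X,\U_X}$ with one computing $d_{\{i_1,i_2,i_3\}\cup Y,\U_Y}$ — which makes the multiplicativity transparent, modulo checking that this gluing description is faithful, which is where the real work lies.
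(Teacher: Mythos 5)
Your handling of the degenerate case is fine (your Lemma \ref{lem:Surplus} argument is a correct substitute for the paper's relative-dimension count). The gap is in the main case: the ``key observation'' is false as stated. If $S_1\in\U_Y$ contains two or three of $i_1,i_2,i_3$ --- which is unavoidable, e.g.\ when $\abs{Y}=1$, so that $S_1=\{i_1,i_2,i_3,y\}$ --- then \emph{every} admissible partition $A_1\sqcup A_2$ separates $\{i_1,i_2,i_3\}$, so the claim that the core lies entirely in one part fails for all contributing terms, not just some. The claim about $X$ is also not forced by admissibility (the no $2$--$2$ condition) alone: for $n=6$, $\{i_1,i_2,i_3\}=\{1,2,3\}$, $X=\{4,5\}$, $Y=\{6\}$, $\U_X=\{\{1,2,3,4\},\{1,2,4,5\}\}$, $S_1=\{1,2,3,6\}$ split as $\{1,2\}\,|\,\{3,6\}$, the partition $A_1=\{1,2,4\}$, $A_2=\{3,5,6\}$ has no $2$--$2$ intersections yet $X$ straddles it; such terms die only because of the cardinality condition $\abs{\U_1}=\abs{A_1}-2$, i.e.\ by combining Lemma \ref{lem:Surplus} at \emph{both} new vertices with a counting argument --- exactly the kind of bookkeeping the paper carries out in the proof of Lemma \ref{lem:Double}. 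Since you explicitly defer this (``where the real work lies''), and since the identification of the renamed data with a smaller instance of the lemma (with $\star$ or $\dagger$ replacing a displaced $i_k$) is only gestured at, the inductive step is not established. The skeleton could probably be completed along the lines of the paper's Lemma \ref{lem:Double} argument, but as written the central claim is wrong and the reassembly of the sum into $d_{\{i_1,i_2,i_3\}\cup X,\U_X}\cdot d_{\{i_1,i_2,i_3\}\cup Y,\U_Y}$ is unproved.

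You are also missing a much shorter route, which is the paper's actual proof: $\pi_\U$ factors as $\M_{0,[n]}\to\M_{0,\{i_1,i_2,i_3\}\cup X}\times\M_{0,\{i_1,i_2,i_3\}\cup Y}$ followed by $\pi_{\U_X}\times\pi_{\U_Y}$. Normalizing $p_{i_1},p_{i_2},p_{i_3}$ to $\infty,0,1$ (simple $3$-transitivity of M\"obius transformations) identifies the first map with the inclusion of the open locus where the remaining coordinates are pairwise distinct, hence it is birational, so $d_{[n],\U}=\deg(\pi_{\U_X}\times\pi_{\U_Y})=d_{\{i_1,i_2,i_3\}\cup X,\U_X}\cdot d_{\{i_1,i_2,i_3\}\cup Y,\U_Y}$ when $\abs{\U_X}=\abs{X}$, and the map has positive relative dimension otherwise. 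No recursion on Lemma \ref{lem:Alg} is needed; I would either adopt this argument or commit to the full Lemma \ref{lem:Double}-style analysis.
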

\begin{proof}
    The map $\pi_{\U}:\M_{0,[n]}\to\prod_{S\in\U}\M_{0,S}$ factors as:
    \begin{align}\label{eq:123}
        \begin{tikzcd}[ampersand replacement=\&]
            \M_{0,[n]}\arrow[r]\&\M_{0,\{i_1,i_2,i_3\}\cup X}\times\M_{0,\{i_1,i_2,i_3\}\cup Y}\arrow[r,"\pi_{\U_X}\times\pi_{\U_Y}"]\&[2em]\displaystyle\prod_{S\in\U_X}\M_{0,S}\times\prod_{S\in\U_Y}\M_{0,S}=\prod_{S\in\U}\M_{0,S}.
        \end{tikzcd}
    \end{align}
    If $\abs{\U_X}\ne\abs{X}$, then either $\pi_{\U_X}$ or $\pi_{\U_Y}$ has positive relative dimension, hence so does $\pi_\U.$ Thus $d_{[n],\U}=0.$

    Suppose $\abs{\U}=\abs{X}$. Since M\"obius transformations act simply 3-transitively on $\P^1$, we may uniquely choose coordinates on $\P^1$ so that $p_{i_1}=\infty,$ $p_{i_2}=0$, and $p_{i_3}=1$. With this choice, the distinct complex numbers $p_{i_4},\ldots,p_{i_n}\in\P^1\setminus\{\infty,0,1\}$ define global coordinates on $\M_{0,[n]}$. The leftmost map in \eqref{eq:123}, in these coordinates, is simply the inclusion of the open subset where none of the coordinates $p_i$ coincide, hence is birational. Thus the degree of $\pi_{\U}$ is equal to the degree of $\pi_{\U_X}\times\pi_{\U_Y}$, and the statement follows.
\end{proof}
\begin{cor}[Cf. {\cite[Cor. 2.19]{Brown2009}}]\label{cor:NoInternalTriangles}
    If $T$ has no internal triangles, then $d_T=1.$
\end{cor}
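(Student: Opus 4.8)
The plan is to prove Corollary \ref{cor:NoInternalTriangles} by induction on $n$, using Lemma \ref{lem:3Overlap} as the engine. For small $n$ (say $n=3$, where $T$ is a single triangle and $\U=\emptyset$), the cross-ratio degree is $1$ by convention. For the inductive step, suppose $T$ is a triangulation of an $n$-gon with no internal triangles, $n\ge4$. First I would locate a triangle $\Delta$ of $T$ having (at least) two exterior edges --- such an ``ear'' always exists, since the dual tree of a triangulation is trivalent and any tree on $\ge2$ vertices has $\ge2$ leaves; the leaves of the dual tree correspond precisely to triangles with two exterior edges. Let the two exterior edges of $\Delta$ be labeled (consecutively) $i_2,i_3\in[n]$ and let the diagonal of $\Delta$ ``cut off'' $\{i_2,i_3\}$ from the rest; write $i_1$ for one endpoint-adjacent label so that the diagonal corresponds to the $4$-element subset $S_\Delta$ consisting of the four edges it touches.

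The key point is that the diagonal of $\Delta$ partitions the edge set $[n]$: on one side we have just $\{i_2,i_3\}$ (the ear), and on the other side all remaining edges. I would set up Lemma \ref{lem:3Overlap} with $X=\{i_3\}$ (the single ``extra'' edge of the ear beyond three reference points) --- more precisely, I would choose three reference edges among $i_1,i_2,i_3$ and one of the two vertices of the diagonal, so that the ``$\{i_1,i_2,i_3\}$'' of the lemma together with $X$ accounts for the four edges touched by the diagonal, i.e. $S_\Delta$, and $Y$ is everything else. One checks that every $S\in\U$ either equals $S_\Delta$ (lying in $\{i_1,i_2,i_3\}\cup X$) or is a $4$-subset coming from a diagonal of the sub-polygon on the $Y$-side (lying in $\{i_1,i_2,i_3\}\cup Y$), because no diagonal other than the one bounding $\Delta$ can touch edge $i_2$ or $i_3$. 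Here $\U_X=\{S_\Delta\}$ is a single subset and $X$ is a single element, so $\abs{\U_X}=\abs{X}=1$, putting us in the nonzero case of Lemma \ref{lem:3Overlap}. Hence $d_T=d_{\{i_1,i_2,i_3\}\cup X,\,\U_X}\cdot d_{\{i_1,i_2,i_3\}\cup Y,\,\U_Y}$. The first factor is the cross-ratio degree of a single forgetful map $\M_{0,4}\to\M_{0,4}$, which is $1$. The second factor is $d_{T'}$ where $T'$ is the triangulation of the $(n-1)$-gon obtained by deleting the ear $\Delta$ (merging $i_2,i_3$ into one edge); crucially $T'$ still has no internal triangles, since removing an ear cannot create one. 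By induction $d_{T'}=1$, so $d_T=1$.

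The main obstacle --- really a bookkeeping subtlety rather than a genuine difficulty --- is verifying that the geometric/combinatorial dictionary between diagonals and $4$-subsets interacts correctly with the ``merge two edges'' operation on polygons, and in particular that after deleting the ear the relabeling of $[n]$ by $[n-1]$ carries $\U\setminus\{S_\Delta\}$ exactly to the subset collection of $T'$ and does not accidentally turn some triangle of $T'$ into an internal one. This is where I would be careful: the two exterior edges $i_2,i_3$ of the ear become a single exterior edge of $T'$, and any triangle of $T$ adjacent to $\Delta$ along its diagonal had that diagonal as one side --- after the merge it still has at least one exterior edge, so no internal triangles are created. I would also double-check the edge case where deleting the ear leaves a triangle ($n=4\to n=3$), which is handled by the base case. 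An alternative to induction, which I might mention, is to iterate: repeatedly stripping ears shows $d_T$ is a product of factors each equal to $1$, or equivalently one can run Lemma \ref{lem:Alg} and observe that the vanishing condition (Lemma \ref{lem:Surplus}) together with the ear structure forces a unique term at each stage. But the inductive argument via Lemma \ref{lem:3Overlap} is cleanest and I would present that.
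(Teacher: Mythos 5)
Your plan (induct on $n$ and peel off a triangle using Lemma \ref{lem:3Overlap}) is close in spirit to the paper's proof, but the step where you verify the hypothesis of Lemma \ref{lem:3Overlap} fails as written. You justify it by claiming that no diagonal other than the one bounding the ear $\Delta$ can touch the ear's exterior edges; this is false: any diagonal emanating from an endpoint of the ear's diagonal $D$ touches the ear's exterior edge at that endpoint. Concretely, take the fan triangulation of a pentagon with edges $1,\dots,5$ and diagonals with subsets $\{1,2,3,4\}$ and $\{1,2,4,5\}$. The triangle with exterior edges $2,3$ is an ear with $S_\Delta=\{1,2,3,4\}$, yet the other diagonal touches edge $2$. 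If you put $X=\{2\}$ (so $\{i_1,i_2,i_3\}=\{1,3,4\}$, $Y=\{5\}$), then $\{1,2,4,5\}$ is contained in neither $\{1,3,4\}\cup X$ nor $\{1,3,4\}\cup Y$, so Lemma \ref{lem:3Overlap} simply does not apply; only the choice $X=\{3\}$ works. So the choice of which ear edge to place in $X$ is forced by the triangulation, and your argument neither makes nor justifies that choice --- and this is exactly the point where the hypothesis ``no internal triangles'' must enter. The repair: the triangle $\Delta'$ across $D$ from the ear has $D$ as one side, so by the hypothesis one of its other two sides is an exterior edge, incident to one endpoint $v$ of $D$; then $v$ carries no diagonal besides $D$ (its incident edges are two polygon sides and $D$), and since the ear's apex carries no diagonals at all, the ear edge at $v$ is touched only by $D$. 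Taking $X$ to be that edge, the hypothesis of Lemma \ref{lem:3Overlap} holds, $\U_X=\{S_\Delta\}$, $\abs{\U_X}=\abs{X}=1$, and the rest of your argument (the $\M_{0,4}$ factor contributing $1$, the merge/relabel dictionary identifying the second factor with $d_{T'}$, and the fact that stripping an ear cannot create internal triangles) goes through.

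For comparison, the paper sidesteps this issue by choosing a different triangle: one with two diagonals and a single exterior edge $i_1$ (which exists by an averaging count), taking $\{i_1,i_2,i_3\}$ to be the three exterior edges it touches and $X,Y$ the two arcs it separates; Lemma \ref{lem:3Overlap} then gives $d_T=d_{T_1}\cdot d_{T_2}$ with both pieces smaller and still free of internal triangles. Your ear-stripping variant is marginally more economical (one factor is trivially $1$ and only one inductive call is needed), but only after the fix above, which is where the no-internal-triangles hypothesis actually does its work.
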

\begin{proof}
    Let $T$ be a triangulation of the $n$-gon with no internal triangles. We induct on $n$, with base case $n=4$. Both triangulations of a square induce the identity map $\pi_T:\Mbar_{0,[4]}\to\Mbar_{0,[4]},$ which has degree 1.

    Suppose $n\ge5.$ On average, a triangle of $T$ contains $\frac{3(n-2)-n}{n-2}>1$ diagonals. Thus there exists a triangle $\Delta$ containing at least two diagonals. Since $T$ has no internal triangles, $\Delta$ contains an edge $i_1$ of the $n$-gon. The vertex of $\Delta$ opposite $i_1$ touches two edges $i_2,i_3$, with $i_1<i_2<i_3<i_1$ in clockwise cyclic order. We construct two triangulations $T_1,T_2$ of smaller polygons as in Figure \ref{fig:Decompose1}. Specifically, let 
    \begin{align*}
        X&=\{i\in[n]:i_1<i<i_2\}&Y&=\{i\in[n]:i_3<i<i_1\}
    \end{align*}
    using clockwise cyclic order, and let $T_1$ be the triangulation of a $(\abs{X}+3)$-gon obtained by cutting $T$ along the diagonal touching edges $i_1-1,i_1,i_2,i_3$, discarding the piece not containing $\Delta,$ and renaming the cut edge by $i_2$. Similarly we have a triangulation $T_2$ of a $(\abs{Y}+3)$-gon by cutting along the diagonal touching $i_1,i_1+1,i_2,i_3$. 
    Since diagonals cannot cross, every diagonal of $T$ either touches four sides in $\{i_1,i_2,i_3\}\cup X$ or touches four sides in $\{i_1,i_2,i_3\}\cup Y$.  There are precisely $\abs{X}$ of the first type since they triangulate a $(\abs{X}+3)$-gon. 
    Thus by Lemma \ref{lem:3Overlap}, we have 
    $$d_T=d_{T_1}\cdot d_{T_2}.$$ 
    Note $T_1$ and $T_2$ have fewer than $n$ sides and no internal triangles. Thus by induction 
    $d_T=1$.
\end{proof}
\begin{figure}
\centering
    \begin{tikzpicture}[scale=.75]
        \foreach \x in {1,...,6} {
        \draw (360*\x/6+3*360/24:3)--(360*\x/6+360/6+3*360/24:3);
        \draw (360*\x/6+3*360/24:3) node {$\bullet$};
        \draw (-360*\x/6+360*4/6+360/24:3.3) node {\small \x};
        }

        \draw[fill=red,fill opacity=.05] (-360*0/6+15*360/24:3)--(-360*3/6+15*360/24:3)--(-360*2/6+15*360/24:3)--cycle;
        \draw (-360*0/6+15*360/24:3)--(-360*3/6+15*360/24:3);
        \draw (-360*0/6+15*360/24:3)--(-360*2/6+15*360/24:3);
        \draw (-360*3/6+15*360/24:3)--(-360*5/6+15*360/24:3);
        \draw (4.5,3) node {$i_1=3$};
        \draw (4.5,2.3) node {$i_2=6$};
        \draw (4.5,1.6) node {$i_3=1$};
        \draw[->,
line join=round,
decorate, decoration={
    snake,
    segment length=8,
    amplitude=1.9,post=lineto,
    post length=2pt
}] (4,0)--(5,0);

        \foreach \x in {3,...,6} {
        \draw ($(14,0)+(-360*\x/6+3*360/6+3*360/24:3)$)--($(14,0)+(-360*\x/6+4*360/6+3*360/24:3)$);
        \draw ($(14,0)+(-360*\x/6+3*360/6+3*360/24:3)$) node {$\bullet$};
        \draw ($(14,0)+(-360*\x/6+360*4/6+360/24:3.3)$) node {\small \x};
        }

        \foreach \x in {1,...,3} {
        \draw ($(9,0)+(-360*\x/6+3*360/6+3*360/24:3)$)--($(9,0)+(-360*\x/6+4*360/6+3*360/24:3)$);
        \draw ($(9,0)+(-360*\x/6+3*360/6+3*360/24:3)$) node {$\bullet$};
        \draw ($(9,0)+(-360*\x/6+360*4/6+360/24:3.3)$) node {\small \x};
        }

        \draw ($(14,0)+(-360*5/6+15*360/24:3)$)--($(14,0)+(-360*3/6+15*360/24:3)$);

        \draw ($(14,0)+(-360*2/6+15*360/24:3)$) node {$\bullet$};

        \draw ($(9,0)+(-360*0/6+3*360/6+3*360/24:3)$) node {$\bullet$};

        \draw[fill=red,fill opacity=.05] ($(14,0)+(-360*3/6+3*360/6+3*360/24:3)$)--($(14,0)+(-360*2/6+3*360/6+3*360/24:3)$)--($(14,0)+(-360*0/6+3*360/6+3*360/24:3)$)--cycle;

        \draw[fill=red,fill opacity=.05] ($(9,0)+(-360*0/6+15*360/24:3)$)--($(9,0)+(-360*2/6+15*360/24:3)$)--($(9,0)+(-360*3/6+15*360/24:3)$)--cycle;

        \draw (9.4,-0.2) node {\small $\textcolor{red}{6}$};

        \draw (12.2,.6) node {\small $\textcolor{red}{1}$};
    \end{tikzpicture}
    \caption{Decomposing along a triangle as in the proof of Corollary \ref{cor:NoInternalTriangles}.}
    \label{fig:Decompose1}
\end{figure}
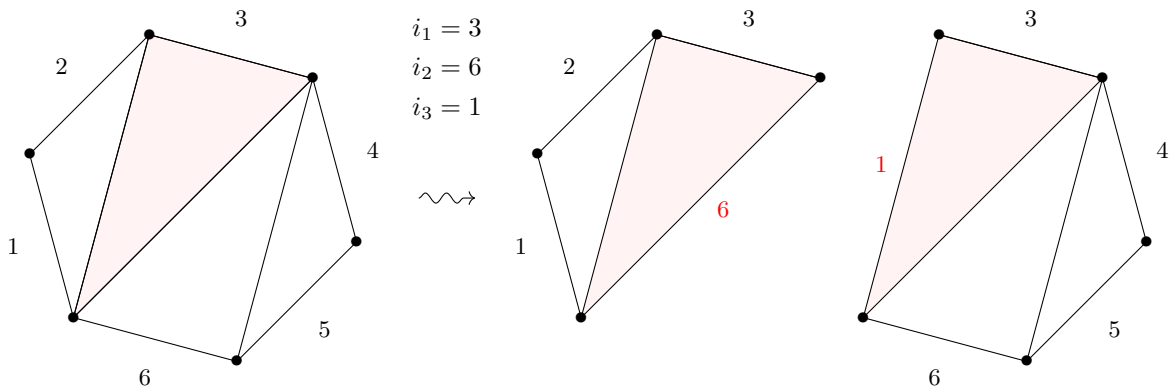
\section{Key lemma and proof of Theorem \ref{thm:main}}
Our key lemma is another (double-)multiplicativity property, which may have applications to computing other classes of cross-ratio degrees.
\begin{lem}\label{lem:Double}
    Let $\U=\{S_1,\ldots,S_{n-3}\}\in\binom{[n]}{4}^{n-3}.$ Suppose there are distinct elements $i_1,\ldots,i_6\in[n]$ with $S_1=\{i_1,i_2,i_3,i_4\},$ $S_2=\{i_3,i_4,i_5,i_6\},$ and $S_3=\{i_1,i_2,i_5,i_6\}.$ Suppose further that there exists a partition $[n]=\{i_1,\ldots,i_6\}\sqcup X\sqcup Y\sqcup Z$ such that for each $S\in\U$, we have either $S\subseteq X\cup S_1$, $S\subseteq Y\cup S_2$, or $S\subseteq Z\cup S_3$. Define $\U_X=\{S\in\U:S\subseteq X\cup S_1\},$ and similarly $\U_Y,\U_Z$.
    Then \begin{align}\label{eq:DoubleMultiplicative}
    d_{[n],\U}=2\cdot d_{X\cup S_1,\U_X}\cdot d_{Y\cup S_2,\U_Y}\cdot d_{Z\cup S_3,\U_Z}.
    \end{align}
\end{lem}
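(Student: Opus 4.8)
The plan is to factor $\pi_\U$ through an intermediate product of moduli spaces and compute the degree of each factor; the substance is a short explicit computation in coordinates. First I would record two preliminaries. (i) $\U_X,\U_Y,\U_Z$ are pairwise disjoint with union $\U$: any $S\in\U_X\cap\U_Y$ would lie in $(X\cup S_1)\cap(Y\cup S_2)=S_1\cap S_2=\{i_3,i_4\}$ (since $X,Y$ are disjoint from each other and from $\{i_1,\dots,i_6\}$), contradicting $\abs S=4$, and each $S\in\U$ lies in at least one piece by hypothesis. Hence $\abs{\U_X}+\abs{\U_Y}+\abs{\U_Z}=n-3=(\abs X+1)+(\abs Y+1)+(\abs Z+1)$. (ii) If these do not match termwise, one of them exceeds its target, say $\abs{\U_X}>\abs X+1$; then $\abs{\bigcup_{S\in\U_X}S}\le\abs X+4<\abs{\U_X}+3$, so $d_{[n],\U}=0$ by Lemma~\ref{lem:Surplus}, while the right side of \eqref{eq:DoubleMultiplicative} vanishes since $d_{X\cup S_1,\U_X}=0$ by convention. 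So I may assume $\abs{\U_X}=\abs X+1$ and likewise for $Y,Z$.

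Next I would set up the factorization. Let $\rho_X\colon\M_{0,[n]}\to\M_{0,X\cup S_1}$ forget the marks in $Y\cup Z\cup\{i_5,i_6\}$, and define $\rho_Y,\rho_Z$ analogously. Since every $S\in\U_X$ is contained in $X\cup S_1$, the product of cross-ratio maps indexed by $\U_X$ factors as $\bar\pi_{\U_X}\circ\rho_X$ where $\deg\bar\pi_{\U_X}=d_{X\cup S_1,\U_X}$ by definition, and similarly for $Y,Z$; hence $\pi_\U=g\circ f$ with
\begin{align*}
f=(\rho_X,\rho_Y,\rho_Z)\colon\M_{0,[n]}\to W:=\M_{0,X\cup S_1}\times\M_{0,Y\cup S_2}\times\M_{0,Z\cup S_3},\qquad g=\bar\pi_{\U_X}\times\bar\pi_{\U_Y}\times\bar\pi_{\U_Z}.
\end{align*}
Each of $\M_{0,[n]}$, $W$, and $\prod_{S\in\U}\M_{0,S}$ is smooth, irreducible, and of dimension $n-3$ (for $W$ this uses the normalization above). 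If some factor of $g$ has degree $0$, then $g$, hence $\pi_\U$, is not dominant, so $d_{[n],\U}=0$ and the formula holds; otherwise $g$ is dominant of degree $d_{X\cup S_1,\U_X}\,d_{Y\cup S_2,\U_Y}\,d_{Z\cup S_3,\U_Z}$, and since degrees multiply along composites of dominant maps of irreducible varieties of equal dimension, $d_{[n],\U}=\deg\pi_\U=\deg g\cdot\deg f$. So it remains to show $f$ is dominant of degree $2$.

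Finally I would compute $\deg f$ in coordinates. Normalize configurations in $\M_{0,[n]}$ by $p_{i_1}=\infty$, $p_{i_2}=0$, $p_{i_3}=1$, so the global coordinates are $p_{i_4},p_{i_5},p_{i_6}$ and $(p_j)_{j\in X\cup Y\cup Z}$. Using the same normalization on $\M_{0,X\cup S_1}$, the map $\rho_X$ is the coordinate projection to $\bigl(p_{i_4},(p_x)_{x\in X}\bigr)$. Normalizing $\M_{0,Y\cup S_2}$ by sending $(p_{i_3},p_{i_4},p_{i_5})\mapsto(\infty,0,1)$ via a Möbius map $\phi$ (rational in $p_{i_4},p_{i_5}$) gives $\rho_Y=\bigl(\phi(p_{i_6}),(\phi(p_y))_{y\in Y}\bigr)$, and normalizing $\M_{0,Z\cup S_3}$ by scaling by $p_{i_5}^{-1}$ gives $\rho_Z=\bigl(p_{i_6}/p_{i_5},(p_z/p_{i_5})_{z\in Z}\bigr)$. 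Over a general point of $W$: the $\rho_X$-coordinates determine $p_{i_4}$ and all $p_x$; the $\rho_Z$-coordinates give $p_{i_6}=c_0p_{i_5}$ and $p_z=c_zp_{i_5}$; substituting into $\phi(p_{i_6})=b_0$ and clearing denominators yields a quadratic in $p_{i_5}$ with nonzero leading coefficient and nonzero discriminant for general parameters; and each of its two roots determines $p_{i_5},p_{i_6}$, all $p_z$, and all $p_y=\phi^{-1}(b_y)$, assembling (for general parameters) into a genuine point of $\M_{0,[n]}$. Hence the general fibre of $f$ has exactly two points, so $\deg f=2$.

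The hard part is this last step: one must confirm that the equation for $p_{i_5}$ really is an \emph{irreducible} quadratic over the function field of $W$ — equivalently, that its discriminant is not a square there — rather than collapsing to a linear or reducible equation. This is precisely the $n=6$, $X=Y=Z=\emptyset$ case of the lemma, where $\M_{0,\{i_1,\dots,i_6\}}\to\M_{0,S_1}\times\M_{0,S_2}\times\M_{0,S_3}$ is directly checked to have degree $2$; in general one simply adds coordinates that are then pinned down rationally, so the degree is unchanged. Everything else is bookkeeping.
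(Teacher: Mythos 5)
Your proof is correct, but it takes a genuinely different route from the paper's. The paper stays inside the combinatorial calculus of cross-ratio degrees: it applies the recursion of Lemma~\ref{lem:Alg} twice to expand $d_{[n],\U}$ over four-vertex marked trees, uses the vanishing criterion of Lemma~\ref{lem:Surplus} together with dimension counts to force each $S\in\U$ and each element of $X,Y,Z$ onto a specific vertex, splits the vertex contributions with Lemma~\ref{lem:3Overlap}, and re-sums via Lemma~\ref{lem:Alg}; the factor $2$ appears as the two admissible tree types, distinguished by which of $i_5,i_6$ lies on which side. You instead factor $\pi_\U$ through $W=\M_{0,X\cup S_1}\times\M_{0,Y\cup S_2}\times\M_{0,Z\cup S_3}$ and compute the degree of $(\rho_X,\rho_Y,\rho_Z)$ in normalized coordinates --- in effect extending the paper's own proof of Lemma~\ref{lem:3Overlap} to a degree-$2$ intermediate map, with the $2$ materializing as the two roots of your quadratic (these correspond geometrically to the paper's two tree types); your disjointness observation $\U=\U_X\sqcup\U_Y\sqcup\U_Z$ and the treatment of the unbalanced case via Lemma~\ref{lem:Surplus} are both right. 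One remark on your final paragraph: irreducibility of the quadratic over $\C(W)$ is not what you need. Since the degree of a dominant generically finite map is the cardinality of a general fibre, and your construction exhibits \emph{both} roots as honest points of $\M_{0,[n]}$ over a general point of $W$, it suffices that the leading coefficient $c_0(1-b_0)$ and the discriminant are not identically zero (specializing $b_0=0$ gives $(c_0p-a_0)(p-1)$ with distinct roots for generic $a_0,c_0$), plus the routine check that the recovered $n$ points are pairwise distinct for general parameters; irreducibility then follows a posteriori from irreducibility of $\M_{0,[n]}$ rather than being an input. What your approach buys is a short, self-contained geometric argument; what the paper's buys is a proof carried out entirely within the cross-ratio-degree calculus, whose intermediate tree expansion is reusable for other degree computations.
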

\begin{proof}
    By Lemma \ref{lem:Alg}, \begin{align}\label{eq:ApplyLemmaAlg}
    d_{[n],\U}=\sum_{\substack{[n]=A_1\sqcup A_2\\i_1,i_2\in A_1\\i_3,i_4\in A_2\\\abs{S_j\cap A_1}\ne2\text{ \emph{for} }2\le j\le n-3}}d_{A_1\cup\{\star\};\U_1}\cdot d_{A_2\cup\{\dagger\};\U_2}.
    \end{align}
    Note that the sum in \eqref{eq:ApplyLemmaAlg} \emph{excludes} partitions $[n]=A_1\sqcup A_2$ where $\{i_5,i_6\}\subseteq A_1$ (since we would then have $\abs{S_2\cap A_1}=2$) or where $\{i_5,i_6\}\subseteq A_2$ (since we would then have $\abs{S_3\cap A_2}=2$). Thus:
    \begin{align}\label{eq:BreakUpFirstTime}
    d_{[n],\U}&=\sum_{\substack{[n]=A_1\sqcup A_2\\i_1,i_2,i_5\in A_1\\i_3,i_4,i_6\in A_2\\\abs{S_j\cap A_1}\ne2\text{ \emph{for} }2\le j\le n-3}}d_{A_1\cup\{\star\};\U_1}\cdot d_{A_2\cup\{\dagger\};\U_2}+\sum_{\substack{[n]=A_1\sqcup A_2\\i_1,i_2,i_6\in A_1\\i_3,i_4,i_5\in A_2\\\abs{S_j\cap A_1}\ne2\text{ \emph{for} }2\le j\le n-3}}d_{A_1\cup\{\star\};\U_1}\cdot d_{A_2\cup\{\dagger\};\U_2}.
    \end{align}
    Using the interpretation from Remark \ref{rem:BuildTrees}, we have expressed $d_{[n],\U}$ as a sum, over graphs of the two types
    \begin{center}\raisebox{-15pt}{
    \begin{tikzpicture}
        \draw (0,0) node {$\bullet$} -- (2,0) node {$\bullet$};
        \foreach \a in {-3,...,3} { \draw (0,0)--++(180+10*\a:.7);
        \draw (2,0)--++(10*\a:.7);
        }
        \draw (-1.5,0) node {$A_1$};
        \draw (150:.9) node {\tiny $i_1$};
        \draw (210:.9) node {\tiny $i_2$};
        \draw (180:.9) node {\tiny $i_5$};
        \draw (2,0)++(30:.9) node {\tiny $i_3$};
        \draw (2,0)++(-30:.9) node {\tiny $i_4$};
        \draw (2,0)++(0:.9) node {\tiny $i_6$};
        \draw (3.6,0) node {$A_2\thickspace$};
        \draw (.4,-.15) node {\tiny $\star$};
        \draw (0,.2) node {\tiny $v_1$};
        \draw (2,.2) node {\tiny $v_2$};
        \draw (1.5,-.15) node {\tiny $\dagger$};
        \draw (-1.2,0) node {\resizebox{\width}{20pt}{$\{$}};
        \draw (3.2,0) node {\resizebox{\width}{20pt}{$\}$}};
    \end{tikzpicture}}
    \quad\quad and \quad\quad\raisebox{-15pt}{
    \begin{tikzpicture}
        \draw (0,0) node {$\bullet$} -- (2,0) node {$\bullet$};
        \foreach \a in {-3,...,3} { \draw (0,0)--++(180+10*\a:.7);
        \draw (2,0)--++(10*\a:.7);
        }
        \draw (-1.5,0) node {$A_1$};
        \draw (150:.9) node {\tiny $i_1$};
        \draw (210:.9) node {\tiny $i_2$};
        \draw (180:.9) node {\tiny $i_6$};
        \draw (2,0)++(30:.9) node {\tiny $i_3$};
        \draw (2,0)++(-30:.9) node {\tiny $i_4$};
        \draw (2,0)++(0:.9) node {\tiny $i_5$};
        \draw (3.6,0) node {$A_2\thickspace,$};
        \draw (.4,-.15) node {\tiny $\star$};
        \draw (0,.2) node {\tiny $v_1$};
        \draw (2,.2) node {\tiny $v_2$};
        \draw (1.6,-.15) node {\tiny $\dagger$};
        \draw (-1.2,0) node {\resizebox{\width}{20pt}{$\{$}};
        \draw (3.2,0) node {\resizebox{\width}{20pt}{$\}$}};
    \end{tikzpicture}
    }
\end{center}
such that for each $2\le j\le n-3$, $S_j$ is supported on $v_1$ or $v_2.$ We now apply Lemma \ref{lem:Alg} to all four of the cross-ratio degrees appearing in \eqref{eq:BreakUpFirstTime}, e.g.:
    $$d_{A_1\cup\{\star\},\U_1}=\sum_{\substack{A_1\cup\{\star\}=A_{1,1}\sqcup (A_{1,2}\cup\{\star\})\\i_1,i_2\in A_{1,1}\\i_5\in A_{1,2}\\\abs{S_j\cap A_{1,1}}\ne2\text{ \emph{for} }S_j\in\U_1\text{ if }j>2}}d_{A_{1,1}\cup\{\clubsuit\};\U_{1,1}}\cdot d_{A_{1,2}\cup\{\spadesuit,\star\};\U_{1,2}}.$$ Here $\U_{1,1}$ is obtained by taking all $S\in\U_1$ with $\abs{S\cap A_{1,1}}\ge3,$ and in each such $S$, replacing any element of $A_{1,2}$ with $\clubsuit.$ Similarly $\U_{1,2}$ is obtained by taking all $S\in\U_1$ with $\abs{S\cap A_{1,2}}\ge3$, and replacing elements of $A_{1,1}$ with $\spadesuit.$ Repeating this process for all cross-ratio degrees in \eqref{eq:BreakUpFirstTime}, we see (again cf. Remark \ref{rem:BuildTrees}) that \begin{align}\label{eq:GraphContributions}
        d_{[n],\U}=\sum_\Gamma d_{A_{1,1}\cup\{\clubsuit\},\U_{1,1}}\cdot d_{A_{1,2}\cup\{\spadesuit,\star\},\U_{1,2}}\cdot d_{A_{2,1}\cup\{\dagger,\heartsuit\},\U_{2,1}}\cdot d_{A_{2,2}\cup\{\diamondsuit\},\U_{2,2}},
    \end{align} 
    where $\Gamma$ ranges over marked trees of the two types (note the positions of $i_5$ and $i_6$)
    \begin{align}\label{GraphType1}\tag{I}
    \raisebox{-45pt}{\begin{tikzpicture}
        \draw (0,0) node {$\bullet$} -- (2,0) node {$\bullet$}-- (4,0) node {$\bullet$}-- (6,0) node {$\bullet$};
        \foreach \a in {-3,...,3} { \draw (0,0)--++(180+10*\a:.7);
        \draw (6,0)--++(10*\a:.7);
        \draw (2,0)--++(-90+10*\a:.7);
        \draw (4,0)--++(-90+10*\a:.7);
        }
        \draw (0,.2) node {\tiny $v_{1,1}$};
        \draw (2,.2) node {\tiny $v_{1,2}$};
        \draw (4,.2) node {\tiny $v_{2,2}$};
        \draw (6,.2) node {\tiny $v_{2,1}$};
        \draw (-1.7,0) node {$A_{1,1}$};
        \draw (150:.9) node {\tiny $i_1$};
        \draw (210:.9) node {\tiny $i_2$};
        \draw (2,-1.4) node {$A_{1,2}$};
        \draw (2,-1) node {\rotatebox{270}{\resizebox{\width}{20pt}{$\}$}}};
        \draw (2,0)++(-120:.9) node {\tiny $i_5$};
        \draw (4,-1) node {\rotatebox{270}{\resizebox{\width}{20pt}{$\}$}}};
        \draw (4,-1.4) node {$A_{2,2}$};
        \draw (4,0)++(-60:.9) node {\tiny $i_6$};
        \draw (6,0)++(30:.9) node {\tiny $i_3$};
        \draw (6,0)++(-30:.9) node {\tiny $i_4$};
        \draw (7.8,0) node {$A_{2,1}\thickspace$};
        \draw (0.4,-.15) node {\tiny $\clubsuit$};
        \draw (1.6,-.15) node {\tiny $\spadesuit$};
        \draw (2.4,-.15) node {\tiny $\star$};
        \draw (3.6,-.15) node {\tiny $\dagger$};
        \draw (4.4,-.15) node {\tiny $\heartsuit$};
        \draw (5.6,-.15) node {\tiny $\diamondsuit$};
        \draw (-1.2,0) node {\resizebox{\width}{20pt}{$\{$}};
        \draw (7.2,0) node {\resizebox{\width}{20pt}{$\}$}};
    \end{tikzpicture}}
\end{align}
\begin{align}\label{GraphType2}\tag{II}
    \raisebox{-45pt}{\begin{tikzpicture}
        \draw (0,0) node {$\bullet$} -- (2,0) node {$\bullet$}-- (4,0) node {$\bullet$}-- (6,0) node {$\bullet$};
        \foreach \a in {-3,...,3} { \draw (0,0)--++(180+10*\a:.7);
        \draw (6,0)--++(10*\a:.7);
        \draw (2,0)--++(-90+10*\a:.7);
        \draw (4,0)--++(-90+10*\a:.7);
        }
        \draw (0,.2) node {\tiny $v_{1,1}$};
        \draw (2,.2) node {\tiny $v_{1,2}$};
        \draw (4,.2) node {\tiny $v_{2,2}$};
        \draw (6,.2) node {\tiny $v_{2,1}$};
        \draw (-1.7,0) node {$A_{1,1}$};
        \draw (150:.9) node {\tiny $i_1$};
        \draw (210:.9) node {\tiny $i_2$};
        \draw (2,-1.4) node {$A_{1,2}$};
        \draw (2,-1) node {\rotatebox{270}{\resizebox{\width}{20pt}{$\}$}}};
        \draw (2,0)++(-120:.9) node {\tiny $i_6$};
        \draw (4,-1) node {\rotatebox{270}{\resizebox{\width}{20pt}{$\}$}}};
        \draw (4,-1.4) node {$A_{2,2}$};
        \draw (4,0)++(-60:.9) node {\tiny $i_5$};
        \draw (6,0)++(30:.9) node {\tiny $i_3$};
        \draw (6,0)++(-30:.9) node {\tiny $i_4$};
        \draw (7.8,0) node {$A_{2,1}\thickspace,$};
        \draw (0.4,-.15) node {\tiny $\clubsuit$};
        \draw (1.6,-.15) node {\tiny $\spadesuit$};
        \draw (2.4,-.15) node {\tiny $\star$};
        \draw (3.6,-.15) node {\tiny $\dagger$};
        \draw (4.4,-.15) node {\tiny $\heartsuit$};
        \draw (5.6,-.15) node {\tiny $\diamondsuit$};
        \draw (-1.2,0) node {\resizebox{\width}{20pt}{$\{$}};
        \draw (7.2,0) node {\resizebox{\width}{20pt}{$\}$}};
    \end{tikzpicture}}
\end{align}
such that for each $4\le j\le n-3,$ there is a (unique) vertex on which $S_j$ is supported. As in Lemma \ref{lem:Alg}, we have nonzero contributions only from graphs satisfying
\begin{align}\label{eq:RightCodimension}
    \abs{\U_{1,1}}&=\abs{A_{1,1}}-2&\abs{\U_{1,2}}&=\abs{A_{1,2}}-1&\abs{\U_{2,1}}&=\abs{A_{2,1}}-1&&\text{and}&\abs{\U_{2,2}}&=\abs{A_{2,2}}-2.
\end{align}
We claim types \eqref{GraphType1} and \eqref{GraphType2} each contribute $d_{X\cup S_1,\U_X}\cdot d_{Y\cup S_2,\U_Y}\cdot d_{Z\cup S_3,\U_Z}$ to $d_{[n];\U}$; this clearly implies \eqref{eq:DoubleMultiplicative}. 


Fix a marked graph $\Gamma$ of type \eqref{GraphType1}. Recall that there is a natural injective map $\U_{1,1}\into\U$, and let $\U_{1,1;X}\subseteq\U_{1,1}$ consist of elements mapping to elements of $\U_X.$ Similarly define $U_{1,1;Y},U_{1,1;Z},U_{1,2;X},\ldots,U_{2,2;Z}.$ 
Let $A_{1,1;X}=A_{1,1}\cap X$, and similarly define $A_{1,1;Y},A_{1,1;Z},A_{1,2;X},\ldots,A_{2,2;Z}.$ 
Note the following equalities; we have similarly statements for $\U_{1,2},A_{1,2},\U_{2,2},A_{2,2},\U_{2,1},A_{2,1}.$
\begin{align}\label{eq:AUUnion}
    \U_{1,1}&=\U_{1,1;X}\sqcup\U_{1,1;Y}\sqcup\U_{1,1;Z}&
    A_{1,1}&=A_{1,1;X}\sqcup A_{1,1;Y}\sqcup A_{1,1;Z}\sqcup\{i_1,i_2\}.
\end{align}
In order for $\Gamma$ to contribute to $d_{[n],\U},$ the four cross-ratio degrees supported at the vertices $v_{1,1},v_{1,2},v_{2,1},v_{2,2}$ must all be nonzero. Lemma \ref{lem:Surplus} implies the following list of bounds:
\begin{enumerate}[label=(\roman*)]
    \item $\U_{1,1;X}=\emptyset$ \quad or \quad $\abs{\bigcup_{S\in \U_{1,1;X}}S}\ge\abs{\U_{1,1;X}}+3$,
    \item $\U_{1,1;Y}=\emptyset$ \quad or \quad $\abs{\bigcup_{S\in \U_{1,1;Y}}S}\ge\abs{\U_{1,1;Y}}+3$,
    \item $\U_{1,1;Z}=\emptyset$ \quad or \quad $\abs{\bigcup_{S\in \U_{1,1;Z}}S}\ge\abs{\U_{1,1;Z}}+3$,
    \item $\U_{1,2;X}\cup\U_{1,2;Y}=\emptyset$ \quad or \quad $\abs{\bigcup_{S\in \U_{1,2;X}\cup\U_{1,2;Y}}S}\ge\abs{\U_{1,2;X}\cup\U_{1,2;Y}}+3$,
    \item $\U_{1,2;Z}=\emptyset$ \quad or \quad $\abs{\bigcup_{S\in \U_{1,2;Z}}S}\ge\abs{\U_{1,2;Z}}+3$,
    \item $\U_{2,2;X}\cup\U_{2,2;Z}=\emptyset$ \quad or \quad $\abs{\bigcup_{S\in \U_{2,2;X}\cup\U_{2,2;Z}}S}\ge\abs{\U_{2,2;X}\cup\U_{2,2;Z}}+3$,
    \item $\U_{2,2;Y}=\emptyset$ \quad or \quad $\abs{\bigcup_{S\in \U_{2,2;Y}}S}\ge\abs{\U_{2,2;Y}}+3$,
    \item $\U_{2,1;X}=\emptyset$ \quad or \quad $\abs{\bigcup_{S\in \U_{2,1;X}}S}\ge\abs{\U_{2,1;X}}+3$,
    \item $\U_{2,1;Y}=\emptyset$ \quad or \quad $\abs{\bigcup_{S\in \U_{2,1;Y}}S}\ge\abs{\U_{2,1;Y}}+3$,
    \item $\U_{2,1;Z}=\emptyset$ \quad or \quad $\abs{\bigcup_{S\in \U_{2,1;Z}}S}\ge\abs{\U_{2,1;Z}}+3$.
\end{enumerate}
By definition, $\bigcup_{S\in \U_{1,1;X}}S\subseteq A_{1,1;X}\cup\{i_1,i_2,\clubsuit\}$, and $\bigcup_{S\in \U_{1,2;X}\cup\U_{1,2;Y}}S\subseteq (A_{1,2;X}\cup A_{1,2;Y})\cup\{\spadesuit,\star\}$, and so on, so the conditions above imply:
\begin{enumerate}[label=(\roman*)]\label{SurplusConditions}
    \item $\abs{A_{1,1;X}}\ge\abs{\U_{1,1;X}}$ \hspace{3in} (also holds if $\U_{1,1;X}=\emptyset$),\label{it:11X}
    \item $\abs{A_{1,1;Y}}\ge\abs{\U_{1,1;Y}}+2$ \quad or \quad $\U_{1,1;Y}=\emptyset$,\label{it:11Y}
    \item $\abs{A_{1,1;Z}}\ge\abs{\U_{1,1;Z}}$ \hspace{3in} (also holds if $\U_{1,1;X}=\emptyset$),\label{it:11Z}
    \item $\abs{A_{1,2;X}}+\abs{ A_{1,2;Y}}\ge\abs{\U_{1,2;X}}+\abs{\U_{1,2;Y}}+1$ \quad or \quad $\U_{1,2;X}\cup\U_{1,2;Y}=\emptyset$,\label{it:12XY}
    \item $\abs{A_{1,2;Z}}\ge\abs{U_{1,2;Z}}$ \hspace{3in} (also holds if $\U_{1,2;Z}=\emptyset$),\label{it:12Z}
    \item $\abs{A_{2,2;X}}+\abs{A_{2,2;Z}}\ge\abs{\U_{2,2;X}}+\abs{\U_{2,2;Z}}+1$ \quad or \quad $\U_{2,2;X}\cup\U_{2,2;Z}=\emptyset$,\label{it:22XZ}
    \item $\abs{A_{2,2;Y}}\ge\abs{U_{2,2;Y}}$ \hspace{3in} (also holds if $\U_{2,2;Y}=\emptyset$),\label{it:22Y}
    \item $\abs{A_{2,1;X}}\ge\abs{U_{2,1;X}}$ \hspace{3in} (also holds if $\U_{2,1;X}=\emptyset$),\label{it:21X}
    \item $\abs{A_{2,1;Y}}\ge\abs{U_{2,1;Y}}$ \hspace{3in} (also holds if $\U_{2,1;Y}=\emptyset$),\label{it:21Y}
    \item $\abs{A_{2,1;Z}}\ge\abs{U_{2,1;Z}}+2$ \quad or \quad $\U_{2,1;Z}=\emptyset$.\label{it:21Z}
\end{enumerate}
Using \eqref{eq:RightCodimension} and \eqref{eq:AUUnion}, we have \begin{align}\label{11Comparison}
    \abs{\U_{1,1;X}}+\abs{\U_{1,1;Y}}+\abs{\U_{1,1;Z}}=\abs{\U_{1,1}}=\abs{A_{1,1}}-2=\abs{A_{1,1}\cap X}+\abs{A_{1,1}\cap Y}+\abs{A_{1,1}\cap Z},
\end{align}
and similarly
\begin{align}\label{12Comparison}
    \abs{\U_{1,2;X}}+\abs{\U_{1,2;Y}}+\abs{\U_{1,2;Z}}&=\abs{\U_{1,2}}=\abs{A_{1,2}}-1=\abs{A_{1,2}\cap X}+\abs{A_{1,2}\cap Y}+\abs{A_{1,2}\cap Z}.\\\label{22Comparison}
    \abs{\U_{2,2;X}}+\abs{\U_{2,2;Y}}+\abs{\U_{2,2;Z}}&=\abs{\U_{2,2}}=\abs{A_{2,2}}-1=\abs{A_{2,2}\cap X}+\abs{A_{2,2}\cap Y}+\abs{A_{2,2}\cap Z}.\\\label{21Comparison}
    \abs{\U_{2,1;X}}+\abs{\U_{2,1;Y}}+\abs{\U_{2,1;Z}}&=\abs{\U_{2,1}}=\abs{A_{2,1}}-2=\abs{A_{2,1}\cap X}+\abs{A_{2,1}\cap Y}+\abs{A_{2,1}\cap Z}.
\end{align}
Comparing \eqref{11Comparison} with \ref{it:11X}--\ref{it:11Z} above, we see that we must have 
\begin{align*}
    \abs{\U_{1,1;X}}&=\abs{A_{1,1;X}},&\U_{1,1;Y}&=\emptyset,&&\text{and}&\abs{\U_{1,1;Z}}&=\abs{A_{1,1;Z}}.
\end{align*} Similarly comparing \eqref{12Comparison} with \ref{it:12XY}--\ref{it:12Z}, \eqref{22Comparison} with \ref{it:22XZ}--\ref{it:22Y}, and \eqref{21Comparison} with \ref{it:21X}--\ref{it:21Z}, we find:
\begin{align*}
\U_{1,2;X}&=\emptyset,&\U_{1,2;Y}&=\emptyset,&&\text{and}&\abs{\U_{1,2;Z}}&=\abs{A_{1,2;Z}},\\
\U_{2,2;X}&=\emptyset,&\abs{\U_{2,2;Y}}&=\abs{A_{2,2;Y}},&&\text{and}&\U_{2,2;Z}&=\emptyset,\\
\abs{\U_{2,1;X}}&=\abs{A_{2,1;X}},&\abs{\U_{2,1;Y}}&=\abs{A_{2,1;Y}},&&\text{and}&\U_{2,1;Z}&=\emptyset.
\end{align*}
The contribution to \eqref{eq:GraphContributions} from $\Gamma$ is therefore $$d_{A_{1,1}\cup\{\clubsuit\},\U_{1,1;X}\cup\U_{1,1;Z}}\cdot d_{A_{1,2}\cup\{\spadesuit,\star\},\U_{1,2;Z}}\cdot d_{A_{2,2}\cup\{\dagger,\heartsuit\},\U_{2,2;Y}}\cdot d_{A_{2,1}\cup\{\diamondsuit\},\U_{2,1;X}\cup\U_{2,1;Y}}.$$ We may break this up further. Note that \eqref{eq:RightCodimension} and \eqref{eq:AUUnion} also imply $$A_{1,1;Y}=A_{1,2;X}=A_{1,2;Y}=A_{2,2;X}=A_{2,2;Z}=A_{2,1;Z}=\emptyset.$$ We thus have the decomposition $A_{1,1}\cup\{\clubsuit\}=\{i_1,i_2,\clubsuit\}\sqcup A_{1,1;X}\sqcup A_{1,1;Z}$, and any element of $\U_{1,1}=\U_{1,1;X}\sqcup\U_{1,1;Z}$ is contained by either $\{i_1,i_2,\clubsuit\}\cup A_{1,1;X}$ or $\{i_1,i_2,\clubsuit\}\cup A_{1,1;Z}$. Thus by Lemma \ref{lem:3Overlap}, we have $$d_{A_{1,1}\cup\{\clubsuit\},\U_{1,1;X}\cup\U_{1,1;Z}}=d_{A_{1,1;X}\cup\{i_1,i_2,\clubsuit\},\U_{1,1;X}}\cdot d_{A_{1,1;Z}\cup\{i_1,i_2,\clubsuit\},\U_{1,1;Z}},$$ and similarly $$d_{A_{2,1}\cup\{\diamondsuit\},\U_{2,1;X}\cup\U_{2,1;Y}}=d_{A_{2,1;X}\cup\{i_1,i_2,\diamondsuit\},\U_{2,1;X}}\cdot d_{A_{2,1;Y}\cup\{i_1,i_2,\diamondsuit\},\U_{2,1;Y}}.$$
Now, consider the contribution to \eqref{eq:GraphContributions} from \emph{all} graphs of type \eqref{GraphType1}:
\begin{align*}
    &\sum_{\Gamma\text{ type \eqref{GraphType1}}} d_{A_{1,1;X}\cup\{i_1,i_2,\clubsuit\},\U_{1,1;X}}\cdot d_{A_{1,1;Z}\cup\{i_1,i_2,\clubsuit\},\U_{1,1;Z}}\cdot d_{A_{1,2;Z}\cup\{i_5,\spadesuit,\star\},\U_{1,2;Z}}\\&\hspace{2in}\cdot d_{A_{2,2;Y}\cup\{i_6,\dagger,\heartsuit\},\U_{2,2;Y}}\cdot d_{A_{2,1;X}\cup\{i_3,i_4,\diamondsuit\},\U_{2,1;X}}\cdot d_{A_{2,1;Y}\cup\{i_3,i_4,\diamondsuit\},\U_{2,1;Y}}\\
    &\\&=\sum_{\substack{X=A_{1,1;X}\sqcup A_{2,1;X}\\Y=A_{2,2;Y}\sqcup A_{2,1;Y}\\Z=A_{1,1;Z}\sqcup A_{1,2;Z}}}d_{A_{1,1;X}\cup\{i_1,i_2,\clubsuit\},\U_{1,1;X}}\cdot d_{A_{1,1;Z}\cup\{i_1,i_2,\clubsuit\},\U_{1,1;Z}}\cdot d_{A_{1,2;Z}\cup\{i_5,\spadesuit,\star\},\U_{1,2;Z}}\\&\hspace{2in}\cdot d_{A_{2,2;Y}\cup\{i_6,\dagger,\heartsuit\},\U_{2,2;Y}}\cdot d_{A_{2,1;X}\cup\{i_3,i_4,\diamondsuit\},\U_{2,1;X}}\cdot d_{A_{2,1;Y}\cup\{i_3,i_4,\diamondsuit\},\U_{2,1;Y}}\\
    &=\left(\sum_{X=A_{1,1;X}\sqcup A_{2,1;X}}d_{A_{1,1;X}\cup\{i_1,i_2,\clubsuit\},\U_{1,1;X}}\cdot d_{A_{2,1;X}\cup\{i_3,i_4,\diamondsuit\},\U_{2,1;X}}\right)\\&\quad\quad\cdot\left(\sum_{Y=A_{2,2;Y}\sqcup A_{2,1;Y}}d_{A_{2,2;Y}\cup\{i_6,\dagger,\heartsuit\},\U_{2,2;Y}}d_{A_{2,1;Y}\cup\{i_3,i_4,\diamondsuit\},\U_{2,1;Y}}\cdot\right)\\&\quad\quad\quad\quad\cdot\left(\sum_{Z=A_{1,1;Z}\sqcup A_{1,2;Z}}d_{A_{1,1;Z}\cup\{i_1,i_2,\clubsuit\},\U_{1,1;Z}}\cdot d_{A_{1,2;Z}\cup\{i_5,\spadesuit,\star\},\U_{1,2;Z}}\right),
\end{align*}
where the sums are, as usual, over partitions $X=A_{1,1;X}\sqcup A_{2,1;X}$ such that $S\cap A_{1,1;X}\ne2$ for all $S\in\U_X$, etc. By Lemma \ref{lem:Alg}, the last expression is equal to $$d_{X\cup S_1,\U_X}\cdot d_{Y\cup S_2,\U_Y}\cdot d_{Z\cup S_3,\U_Z}$$ as desired. By symmetry, the contribution from graphs of type \eqref{GraphType2} is the same, completing the proof of the lemma.
\end{proof}

\begin{proof}[Proof of Theorem \ref{thm:main}]
    We induct on the number $I(T)$ of internal triangles in $T$, with base case $I(T)=0$ given by Corollary \ref{cor:NoInternalTriangles}.

    Fix a triangulation $T$ with $I(T)>0$. Let $\Delta$ be an internal triangle of $T$. Then $\Delta$ touches six sides of the $n$-gon, and these six sides are naturally split up into three pairs --- each pair consists of the two sides adjacent to one of the vertices of $\Delta$. Denote these pairs by $\{i_1,i_2\},$ $\{i_3,i_4\}$, and $\{i_5,i_6\}$, where $i_1<i_2<i_3<i_4<i_5<i_6<i_1$ in clockwise cyclic order. Let \begin{align*}
        X&=\{i\in[n]:i_2<i<i_3\}&Y&=\{i\in[n]:i_4<i<i_5\}&Z&=\{i\in[n]:i_6<i<i_1\},
    \end{align*}
    using the clockwise cyclic order $1\le2\le\cdots\le n\le1$. By construction, the hypotheses of Lemma \ref{lem:Double} are satisfied, and so $$d_T=2\cdot d_{X\cup S_1;\U_X}\cdot d_{Y\cup S_2;\U_Y}\cdot d_{Z\cup S_3;\U_Z}.$$ In other words, \begin{align}\label{eq:DoubleTriangulation}
        d_T=2\cdot d_{T_X}\cdot d_{T_Y}\cdot d_{T_Z},
    \end{align} where $T_X$ (resp. $T_Y,$ $T_Z$) is the triangulation of an $(\abs{X}+4)$-gon (resp. $(\abs{Y}+4)$-gon, $(\abs{Z}+4)$-gon) formed by cutting $T$ along the edges of $\Delta$ other than $S_1$ (resp. $S_2,$ $S_3$) discarding the part not containing $\Delta,$ and renaming the cut edges with $\{i_1,i_4\}$ (resp. $\{i_3,i_6\}$, $\{i_5,i_2\}$), as in Figure \ref{fig:Decompose2}. Note that $\abs{X}+4<n$, since $[n]=\{i_1,\ldots,i_6\}\sqcup X\sqcup Y\sqcup Z$, and similar $\abs{Y}+4<n$ and $\abs{Z}+4<n$. Thus we have $d_{T_X}=2^{I(T_X)}$ by the inductive hypothesis, and similarly $d_{T_Y}=2^{I(T_Y)}$ and $d_{T_Z}=2^{I(T_Z)}$. Thus \eqref{eq:DoubleTriangulation} implies $$d_T=2^{I(T_X)+I(T_Y)+I(T_Z)+1}.$$ On the other hand, every internal triangle of $T$ \textbf{except} $\Delta$ appears as an internal triangle in exactly one of $T_X,$ $T_Y,$ or $T_Z$, and all internal triangles of $T_X$, $T_Y,$ and $T_Z$ arise this way. That is, $I(T)=I(T_X)+I(T_Y)+I(T_Z)+1,$ so $d_T=2^{I(T)}.$
\end{proof}

\bibliography{CrossRatioEmbeddings_v2.bbl}
\bibliographystyle{amsalpha}
\end{document}